\documentclass[10pt,oneside,reqno]{amsart}
\usepackage{amsmath,amssymb}
\usepackage{amsthm,comment}
\usepackage{mathtools}
\usepackage{enumerate}
\usepackage{hyperref}

\usepackage{amssymb}
\usepackage{mathtools}

\newtheorem{thm}{Theorem}[section]
\newtheorem{cor}[thm]{Corollary}
\newtheorem{lemma}[thm]{Lemma}
\newtheorem{prop}[thm]{Proposition}
\theoremstyle{definition}
\newtheorem{defn}[thm]{Definition}
\theoremstyle{remark}
\newtheorem{rem}[thm]{Remark}
\numberwithin{equation}{section}
\newtheorem{exa}[thm]{Example}
\newtheorem{conj}[thm]{Conjecture}
\newtheorem{todo}{ToDo}

\newtheorem{note}{Note}

\def\<{\prec}
\def\>{\succ}
\def\O{{\mathcal O}}
\def\bes{\begin{equation*}}
\def\be{\begin{equation}}
\def\ee{\end{equation}}
\def\ees{\end{equation*}}
\def\dif{\operatorname{d}\!}

\allowdisplaybreaks

\def\bN{\mathbb{N}}
\def\bR{\mathbb{R}}

\def\cR{\mathcal{R}}

\def\arcsinh{\operatorname{arcsinh}}

\newcommand\abs[1]{\lvert{#1}\rvert}

\def\ba{{\bf a}}

\def\bg{{\bf g}}
\def\bh{{\bf h}}
\def\bd{{\bf d}}

\def\bs{{\bf s}}


\begin{document}

\title[Comparison and sub/super-stabilizability of some new means]{Results on comparison and sub/super-stabilizability of some new means}
\author{Lenka Mihokovi\'c and Mustapha Ra\"{i}ssouli}

\keywords{Resultant mean-map; Stable mean; Sub-stabilizable mean; Super-stabilizable mean; Asymptotic expansion; Inequalities; Power means}

\address{Lenka Mihokovi\'c, University of Zagreb, Faculty of Electrical Engineering and Computing, Unska 3, 10000 Zagreb, Croatia}
\email{lenka.mihokovic@fer.hr}

\address{Mustapha Ra\"{i}ssouli, Department of Mathematics, Faculty of Science, Moulay Ismail University, Meknes, Morocco.}
\email{raissouli.mustapha@gmail.com}

\subjclass{26E60; 41A60; 39B62}

\maketitle

\begin{abstract}
	 We present analysis of some new means recently introduced by M.\ Ra\"{i}ssouli and A.\ Rezgui.
	 We establish comparison relations and results on $(K,N)$-sub/super-stabilizability where $K$ and $N$ belong to the class of power means, denoted by $B_p$, 
and $M$ is one of the classical or recently studied new means.
	 Assuming that means $K$, $M$ and $N$ have asymptotic expansions, we present the complete asymptotic expansion of the resultant mean-map.	
	 As an application of the obtained asymptotic expansions and the asymptotic inequality between $M$ and $\cR(B_p,M,B_q)$, we show how to find the optimal parameters $p$ and $q$ for which $M$ is $(B_p,B_q)$-sub/super-stabilizable.

\end{abstract}

\section{Introduction}


Through this paper we consider \emph{bivariate mean}, i.e.\ a function
$M\colon \bR^+ \times \bR^+\to\bR^+$
such that
$
	 \min(s,t) \le M(s,t) \le \max(s,t).
$
We say that mean $M$ is \emph{symmetric} if
$ M(s,t) =M(t,s)$ for all $s,t>0$,
and \emph{homogeneous} (of degree 1) if
$ M(\lambda s,\lambda t)=\lambda M(s,t)$ for all $\lambda,s,t,>0.$
For three homogeneous symmetric bivariate means $K$, $M$ and $N$, we define the so-called  resultant mean-map
\bes
	\cR(K,M,N)(s,t)=K\Bigl( M\bigl(s,N(s,t)\bigr),M\bigl(N(s,t),t\bigr) \Bigr).
\ees
A symmetric mean $M$ is said to be \emph{stable} (\emph{balanced}), if $\cR(M,M,M)=M$.

 \begin{defn}[\cite{Raiss-2011-stab,RaissSandor-2014}]\label{defsubsuperst}
 	Let $K$, $N$ be two nontrivial stable means.
	Mean $M$ is called
	\begin{enumerate}
		\item \emph{$(K,N)$-stabilizable}, if the following relation is satisfied:
			\bes\label{def-stabilizable}
			M(s,t) =\cR(K,M,N)(s,t) = K\Bigl( M\bigl(s,N(s,t)\bigr),M\bigl(N(s,t),t\bigr) \Bigr).
			\ees
 		\item \emph{$(K,N)$-sub-stabilizable}, if $\cR(K,M,N)\le M$ and $M$ is between $K$ and $N$,
 		\item \emph{$(K,N)$-super-stabilizable}, if $M\le\cR(K,M,N)$ and $M$ is between $K$ and $N$.
 	\end{enumerate}
 \end{defn}

Some interesting results regarding these notions can also be found in papers \cite{AnAn-2014,RaissSandor-2016}.
Motivated by the results on sub/super-stabilizability for bivariate means studied in the paper \cite{RaissSandor-2014} in combination with the general algorithms for the coefficients in the asymptotic expansion of the resultant mean-map and consequently of the stabilizable mean, obtained in the paper \cite{Mih-2023-sssa}, we present the results on sub/super-stabilizability in the context of the asymptotic expansions.

\begin{defn}
		For an asymptotic sequence of functions $(\varphi_n)_{n\in\bN_0}$ the (formal) series $\sum_{n=0}^{\infty} a_n \varphi_n(x)$ is said to be an \emph{asymptotic expansion} of a function $f(x)$ as $x\to x_0$ if for each $N\in\bN_0$
		$$
			f(x)=\sum_{n=0}^{N}a_n \varphi_n(x) +o(\varphi_N(x)).
		$$
\end{defn}
Theoretical background from theory of asymptotic expansions can be found in \cite{Erd}.
Through this paper, we use asymptotic expansion with respect to the asymptotic sequence $\varphi_n(x) = x^{1-n}$, $n\in\bN_0$, as $x\to\infty$.
Based on the sign of the first term in such asymptotic expansion we introduce the notion of the asymptotic inequality.
	\begin{defn}[\cite{Vu}]
	Let $F(s,t)$ be any homogeneous bivariate function such that
	\bes
		F(x+s,x+t)
		=c_{k}(t,s)x^{-k+1}+\O(x^{-k}).
	\ees
	If $c_{k}(s,t)>0$ for all $s$ and $t$, then
	we say $F$ is \textit{asymptotically} greater than zero, and write
	\bes
		F\succ 0.
	\ees
\end{defn}
Asymptotic inequality is the necessary condition for the proper inequality,
i.e.\ if $F\ge0$, then $F\succ 0$.

The subject of study in this paper are means from \cite{RaissRezgui-2019} alone for themselves and also in combinations with power means. Recall the definition of the $r$-th power mean
	\bes
		B_r(a,b)=
			\begin{cases}\displaystyle
			\biggl(\frac{a^r+b^r}{2}\biggr)^{1/r},\qquad &r\ne0,\\
			\sqrt{ab},&r=0.
		\end{cases}
	\ees
This class of means covers some well known classical means such as arithmetic mean $A=B_1$, geometric mean $G=B_0$ and harmonic mean $H=B_{-1}$. It has been proved (\cite{Raiss-2011-stab}) that power means $B_p$ are stable for all real values of parameter $p$. Regarding its asymptotic expansion which has been studied in \cite{ElVu-2014-04}, one may find that
\be\label{asymexp-powermean}
	B_p(x-t,x+t)\sim x + \tfrac12(p-1) t^2 x^{-1} +\tfrac1{24}(p-1)(3+p-2p^2)t^4 x^{-3}+\O(x^{-5}).
\ee
\begin{rem}\label{rem-tt-st}
	This so-called one variable asymptotic expansion, i.e.\ asymptotic expansion of a mean in variables $(x-t,x+t)$, is sufficient to determine completely the two variable asymptotic expansion, i.e.\ asymptotic expansion in variables $(x+s,x+t)$ as it was proved in \cite[Lemma 2.1.]{BuMih-2023}. Following the same procedure as in the mentioned Lemma, we may conclude that the first non-zero coefficient in both of those expansions is the same.
\end{rem}

For the convenience of the reader, let us list all the means from \cite{RaissRezgui-2019} which will be involved in analysis in this paper. Means from the mentioned paper can be written in a form
\bes
	m_f(a,b)=\frac{2(a-b)}{f(\frac{a}{b})-f(\frac{b}{a})},\ a\neq b,\qquad m_f(a,a)=a.
\ees
We are interested in some of the special cases.
Let
\bes
	g(x)=\begin{cases}
		\mu(\ln x), &x\ge1,\\
		-\mu(\ln\frac1x), &0<x<1,
		\end{cases}
	\quad
	\mu(x)=\int_0^xu(t) \dif   t \text{ odd function},
\ees
and
\be\label{mean-type-mg}
	m_g(a,b)=\frac{\lvert a-b\rvert}{\mu(\lvert\ln\frac{a}{b}\rvert)} = M_u(a,b).
\ee
With $u(x)=\cosh(\alpha x)$, and for $\lvert \alpha \rvert\le1$, we have
\be\label{mean-def-Lalpha}
	\quad L_\alpha(a,b) \coloneqq M_u(a,b)=\frac{2\alpha a^\alpha b^\alpha (b-a)}{b^{2\alpha}-a^{2\alpha}}.
\ee
When $u(x)=\frac1{\cosh(\alpha x)}$, and for $\lvert \alpha \rvert\le1$, we have
\be\label{mean-def-Salpha}
		S_\alpha(a,b)\coloneqq M_u(a,b)
		=\frac{2\alpha(b-a)}{4\arctan(\frac{b}{a})^\alpha-\pi}
		=\frac{\alpha(b-a)}{2\arctan\frac{b^{\alpha}-a^{\alpha}}{b^{\alpha}+a^{\alpha}}}.
\ee
Remark that $L_{-\alpha}=L_\alpha$ and $S_{-\alpha}=S_\alpha$. We can then assume that $0\leq\alpha\leq1$. The means $L_{\alpha}$ and $S_{\alpha}$ cover some well-known standard means and their properties are embodied in the following result. The first part can be seen by a simple verification and for the second part see \cite{RaissRezgui-2019}.

\begin{prop}\label{pLS}
The following statements are valid:
\begin{enumerate}[(i)]
\item \label{prop-pLS-a} $L_0=S_0=L$, $L_{1/2}=G$, $L_1=H$, $S_{1/2}=P$, $S_1=T$ and $L_{1/4}=HZ_{1/4}$, where $L$, $G$, $H$ denote logarithmic, geometric and harmonic mean,
$P$ and $T$ refer to the first and the second Seiffert mean respectively, and $HZ_{1/4}$ is the Heinz mean.
\item \label{prop-pLS-b} For fixed $a\neq b$, the map $\alpha\longmapsto L_{\alpha}(a,b)$ is strictly decreasing in $\alpha\in[0,1]$ while $\alpha\longmapsto S_{\alpha}(a,b)$ is strictly increasing in $\alpha\in[0,1]$.
\end{enumerate}
\end{prop}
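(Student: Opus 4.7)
The plan is to verify the specific identities in part (i) by direct substitution in the closed forms \eqref{mean-def-Lalpha} and \eqref{mean-def-Salpha}, and to invoke \cite{RaissRezgui-2019} for the monotonicity statement in (ii), as the authors themselves indicate.

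For the $L_\alpha$ identities I would argue as follows. At $\alpha=1/2$ the denominator factor $b^{2\alpha}-a^{2\alpha}=b-a$ cancels the numerator factor $b-a$, leaving $\sqrt{ab}=G$. At $\alpha=1$ the factorization $b^2-a^2=(b-a)(b+a)$ yields $2ab/(a+b)=H$. At $\alpha=1/4$ the factorization $b-a=(b^{1/2}-a^{1/2})(b^{1/2}+a^{1/2})$ reduces $L_{1/4}(a,b)$ to $\tfrac12(ab)^{1/4}(a^{1/2}+b^{1/2})$, which is precisely the Heinz mean $HZ_{1/4}(a,b)=\tfrac12(a^{1/4}b^{3/4}+a^{3/4}b^{1/4})$. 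The case $\alpha=0$ is a removable singularity handled by Taylor expansion: from $a^{2\alpha}=1+2\alpha\ln a+O(\alpha^2)$ one gets $b^{2\alpha}-a^{2\alpha}=2\alpha\ln(b/a)+O(\alpha^2)$, whence $L_0(a,b)=(b-a)/\ln(b/a)=L(a,b)$.

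For the $S_\alpha$ identities, $S_1(a,b)$ matches the second Seiffert mean $T(a,b)=(b-a)/\bigl(2\arctan\tfrac{b-a}{a+b}\bigr)$ by inspection. For $S_{1/2}=P$ I would invoke the half-angle identity $\arcsin x=2\arctan\frac{x}{1+\sqrt{1-x^2}}$ with $x=(b-a)/(a+b)$: since $\sqrt{1-x^2}=2\sqrt{ab}/(a+b)$, one obtains $\frac{x}{1+\sqrt{1-x^2}}=\frac{\sqrt{b}-\sqrt{a}}{\sqrt{b}+\sqrt{a}}$, converting the first Seiffert mean into the arctan form of $S_{1/2}$. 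The case $\alpha=0$ is handled as before: using $\arctan\varepsilon\sim\varepsilon$ the denominator behaves like $\alpha\ln(b/a)+O(\alpha^2)$, so $S_0(a,b)=L(a,b)$ as well. A quick consistency check is that the computed values at the endpoints respect the claimed monotonicity, since $H<G<L<P<T$.

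For part (ii) no new argument is offered; the monotonicity is borrowed from \cite{RaissRezgui-2019}. Should one wish to reproduce it, the natural approach is to fix $a\ne b$, set $t=\tfrac12\ln(b/a)$, and study the sign of the $\alpha$-derivative of the representations \eqref{mean-def-Lalpha} and \eqref{mean-def-Salpha} rewritten in hyperbolic form. The main obstacle is that these derivatives are not manifestly signed and typically require either a convexity argument applied to the odd integrand $u$ of \eqref{mean-type-mg}, or a term-by-term comparison after expanding $\cosh(\alpha t)$, respectively $1/\cosh(\alpha t)$, in powers of $\alpha$; this is the only nontrivial step of the proposition.
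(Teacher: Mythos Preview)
Your proposal is correct and follows exactly the approach the paper indicates: the paper itself gives no detailed proof, stating only that part (i) ``can be seen by a simple verification'' and that part (ii) is taken from \cite{RaissRezgui-2019}. You have simply written out the verifications explicitly (and correctly, including the $\arcsin x = 2\arctan\frac{x}{1+\sqrt{1-x^2}}$ reduction for $S_{1/2}=P$) and deferred part (ii) to the same reference, so there is nothing to add.
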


We also recall the definitions of the following means which were introduced in \cite[Examples 4.4--4.7]{RaissRezgui-2019}, where $a,b>0, a\neq b$, 
$r>0$, $\lvert \alpha\rvert\le1$:
\be\label{mean-def-Mi}
\begin{aligned}
	M_1(a,b)& \coloneqq \frac{\abs{b-a}}{\ln(1+\abs{\ln b-\ln a})},\\
	M_2(a,b)&\coloneqq \frac{b-a}{\sqrt2 \arctan\frac{\ln b-\ln a}{\sqrt2}},\\
	M_3(a,b)&\coloneqq \frac{\abs{b-a}}{2 \arctan \bigl(
			\abs{\ln b-\ln a}+1
		\bigr)-\frac\pi2},\\
	M_4(a,b)&\coloneqq \frac{b-a}{\sqrt2 \arcsinh\frac{\ln b-\ln a}{\sqrt2}},\\
	M_5(a,b)&\coloneqq \frac{\abs{b-a}}{\sqrt2 \Bigl(
		\arcsinh \bigl(
			1+\abs{\ln b-\ln a}
		\bigr)
		-\arcsinh1
	\Bigr)},\\
	M_{\alpha,r}(a,b)&\coloneqq \frac{(r+\alpha)\abs{b-a}}{\bigl(
			1+r\abs{\ln b-\ln a}
		\bigr)^{\frac{r+\alpha}{r}}-1}.
\end{aligned}
\ee

The (double) parameterized mean $M_{\alpha,r}$ includes some of the known classical means as well as the other means that appear to be new. The following result, which is a simple exercise of Real Analysis, clarifies this claim.

\begin{prop}\label{prop-Malphar-specialcases}
Let $r>0$ and $\abs{\alpha}\leq1$. Then the following statements hold:
\begin{enumerate}[(i)]
	\item $M_{\alpha,0}(a,b)=\frac{\alpha \abs{b-a}}{\exp\big({\alpha \abs{\ln b-\ln a}}\big)-1}$, with $M_{0,0}=L$.
	\item $M_{0,r}=L$ for any $r>0$, and $M_{\alpha,\infty}=L$ for any $\abs{\alpha}\leq1$.
	\item $M_{-r,r}(a,b)=\frac{r \abs{b-a}}{\ln\big(1+r \abs{\ln b-\ln a}\big)}$ for $0<r\leq1$. In particular, $M_{-1,1}=M_1$.
\end{enumerate}
\end{prop}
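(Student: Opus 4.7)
The plan is to treat all three statements as direct evaluations of limits of the closed-form expression
\[
	M_{\alpha,r}(a,b) = \frac{(r+\alpha)\abs{b-a}}{\bigl(1+r u\bigr)^{(r+\alpha)/r}-1}, \qquad u := \abs{\ln b - \ln a},
\]
under the degenerations $r\to 0^+$, $\alpha=0$ (or $r\to\infty$), and $\alpha=-r$, respectively. Since each case is essentially an exercise in handling an indeterminate form, I would just reorganize the exponential factor and apply one or two standard limits.

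For (i), I would factor the exponent as $(r+\alpha)/r = 1 + \alpha/r$, so that
\[
	(1+ru)^{(r+\alpha)/r} = (1+ru)\,\bigl((1+ru)^{1/r}\bigr)^{\alpha}.
\]
Using $(1+ru)^{1/r}\to e^{u}$ as $r\to 0^+$ gives $M_{\alpha,0}(a,b) = \alpha\abs{b-a}/(e^{\alpha u}-1)$, valid for $0<\abs{\alpha}\le 1$. The sub-case $M_{0,0}$ then comes from expanding $e^{\alpha u}-1 = \alpha u + O(\alpha^{2})$ and letting $\alpha\to 0$, which yields $\abs{b-a}/u = L(a,b)$.

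For (ii), the equality $M_{0,r}=L$ is immediate: substituting $\alpha=0$ makes the exponent $1$, so the denominator collapses to $ru$ and the factor $r$ cancels with the numerator. For $M_{\alpha,\infty}$, I would take logarithms and observe that, for fixed $\alpha$ and $u$,
\[
	\frac{r+\alpha}{r}\ln(1+ru) = \ln(1+ru) + \frac{\alpha}{r}\ln(1+ru) = \ln(1+ru) + o(1)
\]
as $r\to\infty$, since $(\ln r)/r\to 0$. Hence $(1+ru)^{(r+\alpha)/r} = (1+ru)\bigl(1+o(1)\bigr)$, the denominator is $ru\bigl(1+o(1)\bigr)$, and the ratio tends to $\abs{b-a}/u = L(a,b)$.

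For (iii), the choice $\alpha=-r$ with $0<r\le 1$ is admissible because $\abs{-r}\le 1$, and it produces the $0/0$ indeterminacy one expects. I would write $\epsilon := r+\alpha$ and expand $(1+ru)^{\epsilon/r}-1 = (\epsilon/r)\ln(1+ru)+O(\epsilon^{2})$ as $\epsilon\to 0$, so that
\[
	M_{-r,r}(a,b) = \lim_{\epsilon\to 0}\frac{\epsilon\abs{b-a}}{(\epsilon/r)\ln(1+ru)+O(\epsilon^{2})} = \frac{r\abs{b-a}}{\ln(1+ru)}.
\]
Setting $r=1$ recovers exactly the definition of $M_{1}$ in \eqref{mean-def-Mi}. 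The only real obstacle anywhere in this argument is bookkeeping of the indeterminate forms (particularly the nested limit $r\to 0^+$, $\alpha\to 0$ for $M_{0,0}$ and the $0/0$ form in (iii)); both dissolve once the exponent is split as above.
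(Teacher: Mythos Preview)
Your argument is correct and is exactly the kind of routine limit computation the authors have in mind: the paper gives no proof at all, stating only that the proposition ``is a simple exercise of Real Analysis.'' Your splitting of the exponent for (i), direct substitution and logarithmic estimate for (ii), and linearization in $\epsilon=r+\alpha$ for (iii) are the natural steps, and each limit is handled cleanly.
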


Let $M$ be a (bivariate) mean. For $a,b>0$ we can set $a=e^{-x}G$ and $b=e^xG$ for some $x\in{\mathbb R}$, where $G\coloneqq G(a,b)=\sqrt{ab}$. If $M$ is homogeneous, then
	\be\label{def-fM}
		M(a,b)=G\;M(e^{-x},e^x)\eqqcolon G\;f_M(x).
	\ee
If moreover $M$ is symmetric we can assume that $x\geq0$.
Relying on the associated functions $f_{m_i}$ we may express the characterization of the comparability condition $m_1<m_2$ for any two symmetric homogeneous means $m_1$ and $m_2$.
Namely, it is obvious that
	\begin{equation}\label{mf}
		 m_1(a,b)<m_2(a,b),\,\, \forall a\neq b
		 \ \Longleftrightarrow \
		 f_{m_1}(x)<f_{m_2}(x),\,\,\forall x>0.
	\end{equation}

For example, for the standard means, we have
\begin{gather}
	f_G(x)=1,\quad f_A(x)=\cosh x,\quad f_H(x)=\frac{1}{\cosh x},\quad f_L(x)=\frac{\sinh x}{x},\notag\\
	f_{P}(x)=\frac{\sinh x}{2\arctan\big(\tanh(x/2)\big)},\quad f_T(x)=
\frac{\sinh x}{\arctan(\tanh x)}, \label{f_}
\end{gather}
from which we easily deduce the well-known chain of inequalities $H<G<L<P<A<T$.

For the means $L_\alpha$ and $S_\alpha$, defined by \eqref{mean-def-Lalpha} and \eqref{mean-def-Salpha}, we have for any $x\neq0$
	\be\label{LS}
	f_{L_\alpha}(x)=4\alpha\frac{\sinh x}{\sinh(2\alpha x)},\quad f_{S_\alpha}(x)=\alpha\frac{\sinh x}{\arctan(\tanh\alpha x)},
	\ee
which implies that $L_{\alpha}<S_{\alpha}$ for any $0<\alpha\leq1$.

For the means \eqref{mean-def-Mi}, we can easily find that (for $x>0$)
\be\label{M}
\begin{aligned}
	f_{M_1}(x)&=\frac{2\sinh x}{\ln(1+2x)},
	& f_{M_2}(x)&=\frac{\sqrt{2}\sinh x}{\arctan(x\sqrt{2})},\\
	f_{M_3}(x)&=\frac{\sinh x}{\arctan(1+2x)-\pi/4},
	& f_{M_4}(x)&=\frac{\sqrt{2}\sinh x}{\arcsinh(x\sqrt{2})},\\
	f_{M_5}(x) &=\frac{\sqrt{2}\sinh x}{\arcsinh(1+2x)-\arcsinh1},
	& f_{M_{\alpha,r}}(x) &=\frac{2(r+\alpha)\sinh x}{\big(1+2rx\big)^{\frac{r+\alpha}{r}}-1}.
\end{aligned}
\ee

The rest of the paper is organized as follows. In Section 2, we establish comparison relations involving means from \cite{RaissRezgui-2019} and some other well-known means.
We examine the possibility of being $(A,G)$- or $(G,A)$-sub/super-stabilizable for means defined in \eqref{mean-def-Lalpha}, \eqref{mean-def-Salpha} and \eqref{mean-def-Mi}.
In Section 3, we present complete asymptotic expansions of the above mentioned means. We also extend the result from \cite{Mih-2023-sssa} in order to find the complete asymptotic expansion of the resultant mean-map of means whose asymptotic expansion may include all terms $x^{1-n}$, $n\in\bN_0$, which then could be applied on means which are subject of study in this paper. As a consequence of the coefficient comparison, we find parameters for which means $L_{\alpha}$ are stable and disprove the stability for other means. With use of the coefficients in the asymptotic expansion of power means (\cite{ElVu-2014-04}) we present the coefficients in the asymptotic expansion of the resultant mean-map $\cR(B_p,M,B_q)(x-t,x+t)$ as $x\to\infty$.
In Section 4, we show some of the applications of the obtained results. We analyze the behaviour of the difference $M-\cR(B_p,M,B_q)$, for each of the means defined in \eqref{mean-def-Lalpha}, \eqref{mean-def-Salpha} and \eqref{mean-def-Mi}. We examine when each of these means can be $(B_p,B_q)$-sub/super-stabilizable and, when possible, how to find such optimal parameters $p$ and $q$.

\section{Comparison of means and sub/super-stabilizability}

We start this section by stating some results about comparison between the bivariate means mentioned in the previous section.

\begin{prop}\label{prop-com}
Let $\alpha\in[0,1]$. The following statements hold:
\begin{enumerate}[(i)]
	\item \label{prop-com-a} If $0<\alpha<1/2$ then $G<L_{\alpha}<L<S_{\alpha}<A$.
	\item \label{prop-com-b} If $1/2<\alpha<1$ then $H<L_{\alpha}<G<L<S_{\alpha}<T$.
	\item \label{prop-com-c} If $1/2\leq\alpha\leq\sqrt{2}/2$ then $S_{\alpha}<A$.
\end{enumerate}
\end{prop}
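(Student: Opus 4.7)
My approach is to translate every comparison into a pointwise inequality between associated functions via \eqref{mf}, leaning on the strict monotonicity of $\alpha\mapsto L_\alpha$ and $\alpha\mapsto S_\alpha$ (Proposition~\ref{pLS}\eqref{prop-pLS-b}), on the boundary values $L_0=S_0=L$, $L_{1/2}=G$, $L_1=H$, $S_{1/2}=P$, $S_1=T$ (Proposition~\ref{pLS}\eqref{prop-pLS-a}), and on the classical chain $H<G<L<P<A<T$ displayed just after \eqref{f_}.

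For part \eqref{prop-com-a} with $0<\alpha<1/2$, strict decrease in $\alpha$ gives $G=L_{1/2}<L_\alpha<L_0=L$, while strict increase gives $L=S_0<S_\alpha<S_{1/2}=P$, and the classical $P<A$ closes the chain. Part \eqref{prop-com-b} is exactly parallel, bracketing by $H=L_1$ and $G=L_{1/2}$ on one side, by $L=S_0$ and $T=S_1$ on the other, with the known $G<L$ in the middle.

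The real content is in part \eqref{prop-com-c}. Monotonicity in $\alpha$ reduces it to the single inequality $S_{\sqrt 2/2}<A$, which via \eqref{LS}, \eqref{f_} and \eqref{mf} is equivalent to
\[
 h(x):=\arctan(\tanh\alpha x)-\alpha\tanh x>0,\qquad x>0,\ \alpha=\tfrac{\sqrt 2}{2}.
\]
A direct differentiation shows that $h'(x)$ carries the sign of $\tanh^2 x\,(1+\tanh^2\alpha x)-2\tanh^2\alpha x$; using $1+\tanh^2 y=\cosh(2y)/\cosh^2 y$ and $2\tanh^2 y=(\cosh(2y)-1)/\cosh^2 y$ this simplifies to the sign of $\cosh^2 x-\cosh(2\alpha x)$. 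Substituting $y=2x$ and invoking $2\cosh^2(y/2)=1+\cosh y$, the target becomes $g(y):=\tfrac12(1+\cosh y)-\cosh(\alpha y)\ge 0$ on $y\ge 0$. Since $g(0)=g'(0)=0$, it suffices to establish $g''(y)=\tfrac12\cosh y-\alpha^2\cosh(\alpha y)\ge 0$. My plan for this last step is a term-by-term Maclaurin comparison: the coefficient of $y^{2n}$ in $\cosh y-2\alpha^2\cosh(\alpha y)$ equals $(1-2\alpha^{2n+2})/(2n)!$, which at $\alpha^2=1/2$ becomes $(1-2^{-n})/(2n)!$ --- zero for $n=0$ and strictly positive for every $n\ge 1$. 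Walking strict positivity back up the chain ($g''>0$ on $(0,\infty)\Rightarrow g'>0\Rightarrow g>0\Rightarrow h'>0\Rightarrow h>0$) then gives $S_{\sqrt 2/2}<A$, and hence $S_\alpha<A$ for every $\alpha\le\sqrt 2/2$. The main obstacle is precisely this power-series comparison behind $\cosh^2 x\ge\cosh(2\alpha x)$; the sharpness of the threshold $\alpha=\sqrt 2/2$ is already visible in it, because the lowest Maclaurin coefficient just barely remains non-negative at this value of $\alpha$ and would turn negative for any larger $\alpha$.
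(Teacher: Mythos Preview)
Your proof is correct and follows essentially the same route as the paper: parts \eqref{prop-com-a}--\eqref{prop-com-b} via monotonicity in $\alpha$ together with the boundary identifications, and part \eqref{prop-com-c} by translating $f_{S_\alpha}<f_A$ into the sign of $\cosh^2 x-\cosh(2\alpha x)$ and then differentiating twice more. The only cosmetic differences are that you first reduce \eqref{prop-com-c} to the endpoint $\alpha=\sqrt2/2$ and settle the resulting inequality $\cosh y\ge 2\alpha^2\cosh(\alpha y)$ by a Maclaurin coefficient comparison, whereas the paper treats all $\alpha\le\sqrt2/2$ at once with the one-line observation that $2\alpha^2\le1$ and $\cosh(2\alpha x)\le\cosh(2x)$.
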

\begin{proof}
For proving (\ref{prop-com-a}) and (\ref{prop-com-b}) we use the statement (\ref{prop-pLS-b}) of Proposition \ref{pLS} with the help of (\ref{prop-pLS-a}). The details are straightforward and therefore are omitted here.

To show (\ref{prop-com-c}) we use \eqref{mf} 
with $f_A$ defined in \eqref{f_} and $f_{S_{\alpha}}$ defined in the second formula in \eqref{LS}. Thus, we have to establish that
	$$
		\alpha\frac{\sinh x}{\arctan(\tanh \alpha x)}<\cosh x, \quad \forall x>0,
	$$
or equivalently,
	$$
		g(x)\coloneqq \alpha\tanh x-\arctan(\tanh \alpha x)<0,\quad \forall x>0.
	$$
Simple computation leads to
	\begin{align*}
		g{'}(x)&=\frac{\alpha}{(\cosh x)^2}-\frac{1}{1+(\tanh \alpha x)^2}\frac{\alpha}{(\cosh \alpha x)^2}\\
			&=\frac{\alpha}{(\cosh x)^2}-\frac{\alpha}{(\cosh \alpha x)^2+(\sinh \alpha x)^2}.
	\end{align*}
We need to study the sign of
	$$
		h(x)\coloneqq (\cosh \alpha x)^2+(\sinh \alpha x)^2-(\cosh x)^2,
	$$
for which we have
	$$
		h{'}(x)=2\alpha(\sinh2\alpha x)-\sinh2x\;\; \mbox{and}\;\; h{''}(x)=2\big(2\alpha^2\cosh2\alpha x-\cosh2x\big).
	$$
If $\alpha\leq\sqrt{2}/2$, it is easy to see that $h{''}(x)<0$ for all $x>0$. Thus, $h{'}$ is strictly decreasing for $x>0$ and so, $h{'}(x)<h{'}(0)=0$ for all $x>0$. By the same arguments, we deduce that $h(x)<h(0)=0$ and therefore $g(x)<g(0)=0$, for all $x>0$. The proof is finished.
\end{proof}

\begin{rem}
Numerical computations show that if $\sqrt{2}/2<\alpha<1$ then $S_\alpha$ and $A$ are not comparable.
\end{rem}

The following Proposition is an extension of the result from \cite{RaissRezgui-2019}.

\begin{prop}\label{prop-comM}
We have the following assertions:
\begin{enumerate}[(i)]
	\item \label{prop-comM-a} $M_4<M_5<M_1<M_3$ and $L<M_4<A$. The mean $A$ is not comparable to either one of $M_1, M_2, M_3$ and $M_5$.
	\item \label{prop-comM-b} $L<M_2<M_3$. The mean $M_2$ is not comparable to either one of $M_1,M_4$ and $M_5$.
\end{enumerate}
\end{prop}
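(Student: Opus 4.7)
The characterization \eqref{mf} reduces every ordering/non-ordering assertion in the proposition to a one-variable statement about the explicit functions $f_{M_i}$ displayed in \eqref{M}, together with $f_L$ and $f_A$ in \eqref{f_}. Since the factor $\sinh x>0$ is common to every $f_{M_i}$, each claim $m_1<m_2$ collapses to an inequality between the (transcendental) denominators, while each non-comparability claim reduces to showing that $f_{m_1}(x)-f_{m_2}(x)$ changes sign between the regimes $x\to 0^+$ and $x\to\infty$.

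For the inequalities in (i) and (ii) the common pattern is that both sides vanish at $x=0$, so a single differentiation reduces each one to an elementary algebraic inequality. Explicitly: $M_4<M_5$ is equivalent to $\arcsinh(1+2x)-\arcsinh1<\arcsinh(x\sqrt2)$, whose derivative comparison reduces to $1+2x^2<1+2x+2x^2$; $M_1<M_3$ is equivalent to $2[\arctan(1+2x)-\pi/4]<\ln(1+2x)$, and, using the identity $\arctan(1+2x)-\pi/4=\arctan\bigl(x/(1+x)\bigr)$ that follows from the angle-subtraction formula, this again reduces to a polynomial comparison of derivatives; $L<M_4$ and $L<M_2$ are just $\arcsinh z<z$ and $\arctan z<z$ for $z>0$; $M_4<A$ reduces to $\sqrt2\tanh x<\arcsinh(x\sqrt2)$, whose derivative inequality boils down to $\cosh^4 x>1+2x^2$, which is immediate from $\cosh^2 x=1+\sinh^2 x>1+x^2$; and $M_2<M_3$ reduces to $\sqrt2\arctan\bigl(x/(1+x)\bigr)<\arctan(x\sqrt2)$, handled the same way.

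For the non-comparability claims, I compute the first nonzero Taylor coefficient of $f_{m_1}-f_{m_2}$ at $x=0$ (using the standard series for $\sinh,\cosh,\arctan,\arcsinh,\ln$) and the leading order as $x\to\infty$ (using $\sinh x,\cosh x\sim e^x/2$ together with the limit or logarithmic growth of the relevant denominator). For instance, $f_{M_1}-f_A=x+O(x^2)>0$ near zero while $f_{M_1}/f_A\sim 2/\ln(2x)\to 0$ at infinity, proving $A$ and $M_1$ incomparable; the remaining five non-comparability pairs fit the same template, each time the linear or quadratic term in $x$ at $0^+$ giving one sign and the dominant exponential ratio at $\infty$ giving the opposite sign. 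The most delicate step is the proof of $M_5<M_1$, i.e.\ $\tfrac{1}{\sqrt2}\ln(1+2x)<\arcsinh(1+2x)-\arcsinh1$, where both values and first derivatives agree at $x=0$, so I would use the substitution $u=1+2x$ together with the identity $\arcsinh u=\ln(u+\sqrt{1+u^2})$ to rewrite the derivative comparison as $\sqrt2\,u>\sqrt{1+u^2}$ for $u>1$; this is the main obstacle, along with keeping track of dominant exponentials versus slowly varying denominators of $M_1$ and $M_5$ in the asymptotic regime used for the non-comparability arguments.
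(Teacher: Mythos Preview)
Your proposal is correct and follows essentially the same route as the paper: both reduce each inequality via \eqref{mf} and \eqref{M} to a one-variable comparison of denominators and then settle it by differentiating once, and both handle the non-comparability claims by exhibiting a sign change of $f_{m_1}-f_{m_2}$. The only notable difference is that for non-comparability the paper picks a specific interior point (e.g.\ it checks $g(2)=\ln 5-2\tanh 2<0$ for the pair $A,M_1$) together with the limit at infinity, whereas you use the leading Taylor term at $x=0^+$ together with the asymptotics at infinity; either device works and neither is simpler than the other.
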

\begin{proof}
	 To prove $M_4<M_5<M_1<M_3$ in (\ref{prop-comM-a}), relying on  \eqref{mf} and \eqref{M}, it is equivalent to show that for all $x>0$ the following chain of inequalities holds
	\begin{multline}\label{CH}
		\frac{\sqrt{2}\sinh x}{\arcsinh(x\sqrt{2})}<\frac{\sqrt{2}\sinh x}{\arcsinh(1+2x)-\arcsinh1}\\
		<\frac{2\sinh x}{\ln(1+2x)}<\frac{\sinh x}{\arctan(1+2x)-\pi/4}.
	\end{multline}
	
For the first inequality of \eqref{CH} we consider
	$$
		g(x)\coloneqq \arcsinh(1+2x)-\arcsinh(x\sqrt{2})-\arcsinh1,
	$$
and then
	$$
		g{'}(x)=\frac{2}{\sqrt{1+(1+2x)^2}}-\frac{\sqrt{2}}{\sqrt{1+2x^2}},
	$$
for which it is easy to see that $g{'}(x)<0$ and so $g(x)<g(0)= 
0$, for all $x>0$. 

The proof of the two other inequalities in \eqref{CH} as well as the proof of the inequalities $L<M_4<A$ and also $L<M_2<M_3$ from part (\ref{prop-comM-b}) is similar. The details are therefore omitted here.

To show that, for example, $A$ is not comparable with $M_1$ we proceed as follows: we compare $f_A(x)$ and $f_{M_1}(x)$ for $x>0$, or equivalently, we study the sign of $g(x)\coloneqq \ln(1+2x)-2\tanh x$. It is easy to check that $\lim_{x\uparrow\infty}g(x)=+\infty$ and $g(2)=\ln 5-2\tanh 2<0$. We then have the conclusion.
\end{proof}

The following result concerns comparison of $M_{\alpha,r}$ with $A$ and $G$.

\begin{prop}
Let $r>0$. If $\alpha\leq0$ then $M_{\alpha,r}>G$ and, if $\alpha\geq0$ then $M_{\alpha,r}<A$.
\end{prop}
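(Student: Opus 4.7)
My plan is to reduce both inequalities to one-variable statements via the characterization \eqref{mf}. Recall $f_G(x)=1$, $f_A(x)=\cosh x$, and $f_{M_{\alpha,r}}(x)=2(r+\alpha)\sinh x/\bigl[(1+2rx)^{(r+\alpha)/r}-1\bigr]$ from \eqref{M}; by symmetry I restrict to $x>0$.

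For the first inequality, $M_{\alpha,r}>G$ when $\alpha\le 0$, I would multiply out and study the numerator
\[
    N(x):=2(r+\alpha)\sinh x-(1+2rx)^{(r+\alpha)/r}+1,
\]
together with the denominator $D(x):=(1+2rx)^{(r+\alpha)/r}-1$, so that $f_{M_{\alpha,r}}(x)-1=N(x)/D(x)$. A quick computation gives
\[
    N'(x)=2(r+\alpha)\bigl[\cosh x-(1+2rx)^{\alpha/r}\bigr],
\]
and the bracket is strictly positive for $x>0$, since $\cosh x>1$ while $(1+2rx)^{\alpha/r}\le 1$ (the exponent being $\le 0$). Thus $N'(x)$ carries the sign of $r+\alpha$; integrating from $N(0)=0$, both $N(x)$ and $D(x)$ share this sign, so their ratio is strictly positive on $x>0$. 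The boundary case $r+\alpha=0$ (which requires $0<r\le 1$) is handled separately via Proposition~\ref{prop-Malphar-specialcases}(iii): there $f_{M_{-r,r}}(x)=2r\sinh x/\ln(1+2rx)$, and the chain $2r\sinh x>2rx>\ln(1+2rx)$ settles it.

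For the second inequality, $M_{\alpha,r}<A$ when $\alpha\ge 0$, the prefactor $r+\alpha$ is now strictly positive, so $D(x)>0$ and the claim is equivalent to
\[
    h(x):=(1+2rx)^{(r+\alpha)/r}-1-2(r+\alpha)\tanh x>0,\qquad x>0.
\]
Since $h(0)=0$ and $h'(x)=2(r+\alpha)\bigl[(1+2rx)^{\alpha/r}-1/\cosh^2 x\bigr]$, with $\alpha/r\ge 0$ forcing $(1+2rx)^{\alpha/r}\ge 1>1/\cosh^2 x$ for $x>0$, $h$ is strictly increasing and hence positive on $(0,\infty)$.

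The only genuinely delicate point is the sign bookkeeping in the first part: when $\alpha<-r$ (possible only if $r<1$), both $N(x)$ and $D(x)$ flip sign, but they do so simultaneously, so the ratio remains above $1$. After this observation and the separate treatment of $r+\alpha=0$, the remaining arguments are routine monotonicity.
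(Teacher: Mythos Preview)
Your proof is correct and follows essentially the same route as the paper: both study $h(x)=2(r+\alpha)\sinh x-(1+2rx)^{(r+\alpha)/r}+1$ (your $N$), compute $h'$, use $\cosh x>1\ge(1+2rx)^{\alpha/r}$ for $\alpha\le0$, and split according to the sign of $r+\alpha$. Your version is in fact slightly more complete, since you handle the boundary case $r+\alpha=0$ explicitly (the paper's case split covers only $r+\alpha>0$ and $r+\alpha<0$) and spell out the second inequality via the clean $\tanh$ reformulation rather than leaving it as ``similar''.
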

\begin{proof}
Assume that $\alpha\leq0$. We want to show that $f_{M_{\alpha,r}}(x)>f_G(x)$ for any $x>0$. We consider
$$g(x)\coloneqq \frac{h(x)}{\big(1+2rx\big)^{\frac{\alpha+r}{r}}-1},$$
with
$$h(x)=2(\alpha+r)\sinh x-\big(1+2rx\big)^{\frac{\alpha+r}{r}}+1.$$
It is clear that $h{'}(x)=2(r+\alpha)\Big(\cosh x-\big(1+2rx\big)^{\frac{\alpha}{r}}\Big)$. We have the following situations:
\begin{itemize}
	\item If $\alpha+r>0$ then $\big(1+2rx\big)^{\frac{\alpha+r}{r}}-1>0$ and if moreover $\alpha\leq0$ then $\cosh x>1\geq(1+2rx)^{\alpha/r}$ for any $x>0$. In this case, $h{'}(x)>0$ and so $h(x)>h(0)=0$, for all $x>0$. We then deduce that $g(x)>0$ for all $x>0$.
	\item If $\alpha+r<0$ then $\big(1+2rx\big)^{\frac{\alpha+r}{r}}-1<0$ and if moreover $\alpha\leq0$ then $h{'}(x)<0$ and so $h(x)<h(0)=0$, for all $x>0$. We then infer that $g(x)>0$ for any $x>0$.
\end{itemize}
Summarizing, we have shown the desired result.

The inequality $M_{\alpha,r}<A$, for $\alpha\geq0$, can be established in a similar way, and we leave it to the reader.
\end{proof}

\begin{rem}
Numerical computations show that, if $\alpha>0$ (resp.\ $\alpha<0$) then $M_{\alpha,r}$ is not comparable with $G$ (resp.\ $A$).
\end{rem}

We will now study the sub/super-stabilizability of some of the above means. We recall the following result. 

\begin{prop}[\cite{RaissSandor-2014}]\label{prop-rs}
Let $M$ be a continuous symmetric mean. Then
\begin{enumerate}[(i)]
	\item If $M$ is $(A,G)$-sub-stabilizable then $L\leq M\leq A$.
	\item If $M$ is $(A,G)$-super-stabilizable then $G\leq M\leq L$.
\end{enumerate}
\end{prop}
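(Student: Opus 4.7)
The plan is to iterate the one-step sub/super-stabilizability inequality, exploiting that in logarithmic coordinates the geometric mean is exactly the midpoint, so that the $n$-fold iterate unfolds into a dyadic Riemann sum converging to the logarithmic mean. First I would note that the containment $G \le M \le A$ in both parts is built into Definition \ref{defsubsuperst}, since $M$ must lie between $K = A$ and $N = G$; only the refined bounds $L \le M$ in~(i) and $M \le L$ in~(ii) require argument.

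For part~(i), write $\xi = \ln s$, $\eta = \ln t$ with $s \neq t$, and observe, using $A(x,y) = (x+y)/2$ and $G(s,t) = e^{(\xi+\eta)/2}$, that the hypothesis $\cR(A,M,G) \le M$ reads
\begin{equation*}
M(s,t) \ge \tfrac12 \bigl( M(s, G(s,t)) + M(G(s,t), t) \bigr).
\end{equation*}
Applying this to each summand in turn (each with its own geometric midpoint) and iterating $n$ times, every step bisecting the current log-subinterval, I would arrive at
\begin{equation*}
M(s,t) \ge \frac{1}{2^n} \sum_{k=0}^{2^n - 1} M(s_k, s_{k+1}), \qquad s_k \coloneqq \exp\bigl(\xi + k(\eta - \xi)/2^n\bigr).
\end{equation*}
The mean property alone gives $\min(s_k, s_{k+1}) \le M(s_k, s_{k+1}) \le \max(s_k, s_{k+1})$, so the right-hand side is squeezed between the left- and right-endpoint Riemann sums of $u \mapsto e^u$ on $[\xi, \eta]$ normalised by $\eta - \xi$. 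Both envelopes converge to
\begin{equation*}
\frac{e^\eta - e^\xi}{\eta - \xi} = \frac{t - s}{\ln t - \ln s} = L(s,t),
\end{equation*}
which yields $M(s,t) \ge L(s,t)$. Part~(ii) is the mirror image: super-stabilizability reverses every inequality above, and the same iteration delivers $M(s,t) \le L(s,t)$.

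The step that deserves the most explicit bookkeeping is that the iteration really does land on the dyadic partition $\xi + k(\eta - \xi)/2^n$ of the logarithmic interval. This is a direct consequence of the identity $G(e^a, e^b) = e^{(a+b)/2}$, which makes each application of the hypothesis an exact midpoint bisection in the $\ln$-variable. Once this is in place there is no serious analytic obstacle: the squeeze argument needs no regularity of $M$ beyond the defining mean property, so the continuity assumption is used only lightly, to guarantee that $M$ is unambiguously evaluated at all the iterated partition points.
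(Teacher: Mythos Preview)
The paper does not supply its own proof of this proposition: it is quoted verbatim from \cite{RaissSandor-2014} and then used as a black box. So there is no ``paper's proof'' to compare against here.

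Your argument is correct and is in fact the natural one. The key identity $G(e^a,e^b)=e^{(a+b)/2}$ turns each application of the hypothesis $\cR(A,M,G)\le M$ into an exact dyadic bisection of $[\xi,\eta]$, and after $n$ steps you land precisely on
\[
M(s,t)\ \ge\ \frac{1}{2^n}\sum_{k=0}^{2^n-1}M(s_k,s_{k+1}),\qquad s_k=\exp\!\bigl(\xi+k(\eta-\xi)/2^n\bigr).
\]
The squeeze by the mean property then sandwiches this between $\tfrac{1}{2^n}\sum s_k$ and $\tfrac{1}{2^n}\sum s_{k+1}$, both of which converge to $(\eta-\xi)^{-1}\int_\xi^\eta e^u\,du=L(s,t)$. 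Part~(ii) is indeed the same computation with reversed inequalities.

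One small correction: your remark that continuity is needed ``to guarantee that $M$ is unambiguously evaluated at all the iterated partition points'' is not quite right---a mean is by definition a function on all of $\bR^+\times\bR^+$, so evaluation is never in doubt. In your argument continuity plays no role whatsoever: the squeeze uses only the defining inequality $\min\le M\le\max$, and the limiting Riemann sums do not involve $M$ at all. The continuity hypothesis in the original statement is presumably there for other uses in \cite{RaissSandor-2014}, but your proof goes through for an arbitrary symmetric mean.
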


Combining Proposition \ref{prop-rs} with Proposition \ref{prop-com} and Proposition \ref{prop-comM} we immediately deduce the following corollary.

\begin{cor}
Let $\alpha\in[0,1]$. The following statements hold:
\begin{enumerate}[(i)]
	\item  $L_{\alpha}$ is not $(A,G)$-sub-stabilizable and $S_{\alpha}$ is not $(A,G)$-super-stabilizable.
	\item If $1/2<\alpha\leq1$ then $L_\alpha$ is not $(A,G)$-super-stabilizable.
	\item  The three means $M_1,M_3$ and $M_5$ are neither $(A,G)$-sub/super-stabilizable nor $(G,A)$-sub/super-stabilizable.
\end{enumerate}
\end{cor}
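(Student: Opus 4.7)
The proof is essentially a bookkeeping exercise: Proposition \ref{prop-rs} converts $(A,G)$-sub- or super-stabilizability of a mean $M$ into a two-sided bound ($L\le M\le A$ or $G\le M\le L$ respectively), and it suffices to violate one of these bounds using the comparisons already at hand. The $(G,A)$-cases in (iii) bypass Proposition \ref{prop-rs} entirely and are ruled out directly from the ``$M$ is between $K$ and $N$'' clause of Definition \ref{defsubsuperst}.

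For (i), I would combine the base values $L_0=S_0=L$ from Proposition \ref{pLS}(\ref{prop-pLS-a}) with the strict monotonicity in Proposition \ref{pLS}(\ref{prop-pLS-b}): for every $\alpha\in(0,1]$ this gives $L_\alpha<L<S_\alpha$, which destroys the lower bound $L\le L_\alpha$ required by Proposition \ref{prop-rs}(i) for $L_\alpha$ and the upper bound $S_\alpha\le L$ required by Proposition \ref{prop-rs}(ii) for $S_\alpha$. For (ii), Proposition \ref{prop-com}(\ref{prop-com-b}) yields $L_\alpha<G$ on the open range $1/2<\alpha<1$ directly, while at the endpoint $\alpha=1$ Proposition \ref{pLS}(\ref{prop-pLS-a}) identifies $L_1=H<G$; in either subcase the condition $G\le L_\alpha$ demanded by Proposition \ref{prop-rs}(ii) fails.

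For (iii), my plan is to observe that all four notions---$(A,G)$- and $(G,A)$-sub/super-stabilizability---share one common necessary condition built into Definition \ref{defsubsuperst}: $M$ must lie between $A$ and $G$, and in particular $M\le A$. Proposition \ref{prop-comM}(\ref{prop-comM-a}) asserts that $A$ is not comparable with any of $M_1$, $M_3$ and $M_5$, so this single clause simultaneously rules out all four options for each of the three means.

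There is no genuinely hard step; the whole argument reduces to quoting the correct line from the correct proposition. The only care needed is at the $\alpha$-endpoints in (i) and (ii), where Proposition \ref{prop-com} is phrased on open intervals---these cases are covered either by the strict monotonicity of Proposition \ref{pLS}(\ref{prop-pLS-b}) or by the identification of $L_\alpha$ and $S_\alpha$ with classical means in Proposition \ref{pLS}(\ref{prop-pLS-a}).
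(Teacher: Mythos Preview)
Your proposal is correct and follows essentially the same route as the paper, which merely says the corollary is immediate from Propositions~\ref{prop-rs}, \ref{prop-com} and~\ref{prop-comM}. Your treatment is in fact a bit more careful: for (i) you go straight to Proposition~\ref{pLS} (the source of Proposition~\ref{prop-com}), which covers all $\alpha\in(0,1]$ at once without separate endpoint checks, and for (iii) you correctly observe that Proposition~\ref{prop-rs} only handles the $(A,G)$-cases, so the $(G,A)$-cases must be dispatched directly from the ``between'' clause of Definition~\ref{defsubsuperst}---exactly as you do.
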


To giving more results about sub/super-stabilizability we need the following lemma.

\begin{lemma}[\cite{Raiss-2011-stab}]\label{lemma-AG}
Let $m$ be a symmetric homogeneous mean. Then, for any $a,b>0$, we have
$$\cR(A,m,G)(a,b)=A\big(\sqrt{a},\sqrt{b}\big)\;m\big(\sqrt{a},\sqrt{b}\big).$$
\end{lemma}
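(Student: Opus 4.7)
The plan is to unfold the definition of the resultant mean-map and then exploit the homogeneity and symmetry of $m$ together with the explicit formula for the geometric mean.

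First I would write out
\bes
 \cR(A,m,G)(a,b)=A\Bigl(m\bigl(a,G(a,b)\bigr),\,m\bigl(G(a,b),b\bigr)\Bigr),
\ees
and substitute $G(a,b)=\sqrt{ab}=\sqrt{a}\cdot\sqrt{b}$. Next I would factor each inner entry using homogeneity of $m$: since $a=\sqrt{a}\cdot\sqrt{a}$ and $\sqrt{ab}=\sqrt{a}\cdot\sqrt{b}$, we have
\bes
 m\bigl(a,\sqrt{ab}\bigr)=m\bigl(\sqrt{a}\cdot\sqrt{a},\sqrt{a}\cdot\sqrt{b}\bigr)=\sqrt{a}\,m\bigl(\sqrt{a},\sqrt{b}\bigr),
\ees
and similarly, using the symmetry of $m$ in the second factorization,
\bes
 m\bigl(\sqrt{ab},b\bigr)=\sqrt{b}\,m\bigl(\sqrt{b},\sqrt{a}\bigr)=\sqrt{b}\,m\bigl(\sqrt{a},\sqrt{b}\bigr).
\ees

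Now I would pull the common factor $m(\sqrt{a},\sqrt{b})$ out of $A$ by homogeneity of the arithmetic mean, which gives
\bes
 A\Bigl(\sqrt{a}\,m(\sqrt{a},\sqrt{b}),\,\sqrt{b}\,m(\sqrt{a},\sqrt{b})\Bigr)=m\bigl(\sqrt{a},\sqrt{b}\bigr)\,A\bigl(\sqrt{a},\sqrt{b}\bigr),
\ees
which is precisely the claimed identity. There is really no obstacle here; the only thing to watch is that both symmetry and homogeneity of $m$ are used (symmetry is what lets the second entry be factored with $\sqrt{b}$ on the outside), and that $A$ is homogeneous so the scalar factor can be pulled out cleanly.
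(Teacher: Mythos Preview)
Your proof is correct. The paper does not actually supply its own proof of this lemma; it is quoted from \cite{Raiss-2011-stab}, and the argument there is exactly the direct computation you carried out using homogeneity and symmetry of $m$ together with the explicit form of $G$ and the homogeneity of $A$.
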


Now, we may state the following result.

\begin{thm}
Let $0<\alpha\leq\sqrt{2}/2$. Then, $S_{\alpha}$ is strictly $(A,G)$-sub-stabilizable. In particular, the first Seiffert mean $P=S_{1/2}$ is strictly $(A,G)$-sub-stabilizable.
\end{thm}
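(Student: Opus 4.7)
The plan is to verify both conditions of $(A,G)$-sub-stabilizability in Definition \ref{defsubsuperst} with strict inequality. That is, I need to check (a) that $S_\alpha$ lies between $G$ and $A$ on the range $0<\alpha\leq\sqrt{2}/2$, and (b) that $\mathcal{R}(A,S_\alpha,G)<S_\alpha$ strictly. For (a), combining Proposition \ref{prop-com}(\ref{prop-com-a}) on $(0,1/2)$, Proposition \ref{prop-com}(\ref{prop-com-c}) on $[1/2,\sqrt{2}/2]$, and the chain $G<L<S_\alpha$ coming from Proposition \ref{prop-com}(\ref{prop-com-a})(\ref{prop-com-b}) already gives $G<S_\alpha<A$ throughout the required range, so the real work is in (b).

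For (b), the key reduction is Lemma \ref{lemma-AG} applied with $m=S_\alpha$, which collapses the resultant-mean condition into a product:
$$\mathcal{R}(A,S_\alpha,G)(a,b)=A(\sqrt{a},\sqrt{b})\,S_\alpha(\sqrt{a},\sqrt{b}).$$
Then I would parameterize $a=Ge^{-x}$, $b=Ge^{x}$ with $G=\sqrt{ab}$ and $x>0$, so that $G(\sqrt{a},\sqrt{b})=\sqrt{G}$ and the $f_M$-representation \eqref{def-fM} cancels a common factor of $G$ on both sides. The inequality $\mathcal{R}(A,S_\alpha,G)<S_\alpha$ becomes the one-variable statement
$$\cosh(x/2)\,f_{S_\alpha}(x/2)<f_{S_\alpha}(x),\qquad x>0.$$
Substituting the formula for $f_{S_\alpha}$ from \eqref{LS} and invoking $\sinh x=2\sinh(x/2)\cosh(x/2)$, everything simplifies to
$$\arctan(\tanh\alpha x)<2\arctan\bigl(\tanh(\alpha x/2)\bigr).$$

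Setting $y=\alpha x/2>0$, it remains to show $\arctan(\tanh 2y)<2\arctan(\tanh y)$. Since $\tanh y\in(0,1)$, both quantities lie in $(0,\pi/2)$, so by monotonicity of $\tan$ on that interval it suffices to compare tangents. A direct computation yields
$$\tan\bigl(\arctan(\tanh 2y)\bigr)=\tanh 2y=\frac{2\tanh y}{1+\tanh^2 y},\qquad \tan\bigl(2\arctan(\tanh y)\bigr)=\frac{2\tanh y}{1-\tanh^2 y},$$
and the claimed strict inequality is then an immediate consequence of $1-\tanh^2 y<1+\tanh^2 y$.

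The only non-routine step is recognizing that Lemma \ref{lemma-AG} turns a two-variable resultant-mean inequality into a one-variable comparison, after which the trigonometric manipulation is elementary. It is worth noting that the final tangent comparison holds for every $\alpha>0$, so the hypothesis $\alpha\leq\sqrt{2}/2$ enters the argument only through the betweenness condition, not through the resultant inequality itself. The special case $\alpha=1/2$ (i.e., $P=S_{1/2}$) then follows with no extra work.
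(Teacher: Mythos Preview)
Your proof is correct and follows essentially the same route as the paper: the betweenness $G<S_\alpha<A$ is taken from Proposition~\ref{prop-com}, Lemma~\ref{lemma-AG} is used to collapse $\mathcal{R}(A,S_\alpha,G)$ to $A(\sqrt a,\sqrt b)\,S_\alpha(\sqrt a,\sqrt b)$, and the substitution $a=Ge^{-x}$, $b=Ge^{x}$ together with $\sinh x=2\sinh(x/2)\cosh(x/2)$ reduces everything to the single inequality $\arctan(\tanh\alpha x)<2\arctan(\tanh(\alpha x/2))$. The only difference is the final step: the paper sets $g(x)=\arctan(\tanh\alpha x)-2\arctan(\tanh(\alpha x/2))$ and checks $g'(x)<0$, whereas you apply $\tan$ to both sides and compare via the double-angle identities for $\tanh$ and $\tan$; both arguments are elementary and of comparable length, and your observation that this last inequality needs no upper bound on $\alpha$ is accurate.
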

\begin{proof}
	Firstly, by following Proposition \ref{prop-com} we have $G\leq S_{\alpha}\leq A$ for any $0<\alpha\leq\sqrt{2}/2$. According to Definition \ref{defsubsuperst} we have to prove that the inequality $\cR\big(A,S_{\alpha},G\big)(a,b)<S_{\alpha}(a,b)$ holds for any $a>b$. By Lemma \ref{lemma-AG}, with substitution \eqref{def-fM}, it is equivalent to show that the inequality
	\bes
		f_A(\tfrac{x}2) f_{S_{\alpha}}(\tfrac{x}2) < f_{S_{\alpha}}(x)
	\ees
	holds for any $x>0$,
	and in combination with the corresponding relations in \eqref{f_} and in \eqref{LS}, 
	this is equivalent to 
	$$
		\alpha\cosh(x/2)\frac{\sinh(x/2)}{\arctan\big(\tanh(\alpha x/2)\big)}		
			<\alpha\frac{\sinh x}{\arctan\big(\tanh(\alpha x)\big)},
	$$
for $x>0$. 
Using formula $\sinh x=2\sinh(x/2)\cosh(x/2)$ and setting $g(x)\coloneqq \arctan\big(\tanh(\alpha x)\big)-2\arctan\big(\tanh(\alpha x/2)\big)$ we easily verify that $g{'}(x)<0$ and so $g(x)<g(0)=0$, for all $x>0$. We then deduce the desired result.
\end{proof}

\begin{thm}
	Let $0<\alpha<1/2$. Then $L_\alpha$ is strictly $(A,G)$-super-stabilizable. In particular, the Heinz mean $HZ_{1/4}=L_{1/4}$ is strictly $(A,G)$-super-stabilizable.
\end{thm}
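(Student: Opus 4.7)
The plan is to mirror the argument used in the preceding theorem for $S_\alpha$. First I would verify the membership condition required by Definition \ref{defsubsuperst}: by Proposition \ref{prop-com}(\ref{prop-com-a}), for $0<\alpha<1/2$ we have $G<L_\alpha<L<S_\alpha<A$, so $L_\alpha$ lies strictly between $G$ and $A$. This is precisely where the hypothesis $\alpha<1/2$ enters; for $\alpha\geq 1/2$ the monotonicity in Proposition \ref{pLS}(\ref{prop-pLS-b}) together with $L_{1/2}=G$ forces $L_\alpha\leq G$, so $L_\alpha$ is no longer in the admissible range.

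Next, I would reduce the strict inequality $L_\alpha(a,b)<\cR(A,L_\alpha,G)(a,b)$ (required for $a\neq b$) to a one-variable inequality via Lemma \ref{lemma-AG} together with the substitution from \eqref{def-fM}. This produces the equivalent statement
$$f_{L_\alpha}(x)<f_A(x/2)\,f_{L_\alpha}(x/2),\qquad x>0.$$
Substituting the explicit formulas $f_A(x)=\cosh x$ from \eqref{f_} and $f_{L_\alpha}(x)=4\alpha\sinh x/\sinh(2\alpha x)$ from \eqref{LS}, and applying the double-angle identity $\sinh x=2\sinh(x/2)\cosh(x/2)$ on the right-hand side, the common factor $2\alpha\sinh x$ cancels and the inequality collapses to
$$\sinh(2\alpha x)>2\sinh(\alpha x),\qquad x>0,$$
which, by another application of the same double-angle identity, is in turn equivalent to $\cosh(\alpha x)>1$. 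This last inequality holds trivially for every $x>0$ whenever $\alpha\neq 0$, which completes the core computation.

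Finally, the statement about the Heinz mean $HZ_{1/4}=L_{1/4}$ is then an immediate specialization to $\alpha=1/4\in(0,1/2)$, using the identification in Proposition \ref{pLS}(\ref{prop-pLS-a}).

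I do not anticipate any significant obstacle: the argument is essentially algebraic, and the convenient cancellation between $\sinh x=2\sinh(x/2)\cosh(x/2)$ arising in $f_{L_\alpha}(x)$ and the factor $\cosh(x/2)=f_A(x/2)$ on the right side makes the reduction to $\cosh(\alpha x)>1$ almost automatic, avoiding the auxiliary derivative-sign analysis that was needed for the $S_\alpha$ proof. The only subtlety worth flagging is that the one-variable inequality itself does not use $\alpha<1/2$; that hypothesis is needed only to place $L_\alpha$ strictly between $G$ and $A$, as demanded by the definition of $(A,G)$-super-stabilizability.
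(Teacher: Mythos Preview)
Your proof is correct and follows essentially the same route as the paper: both reduce the claim, via Lemma \ref{lemma-AG} and the substitution \eqref{def-fM}, to showing $\sinh(2\alpha x)>2\sinh(\alpha x)$ for $x>0$. The only cosmetic difference is in verifying this last inequality: the paper differentiates $g(x)=\sinh(2\alpha x)-2\sinh(\alpha x)$ and uses $g'(x)>0$, $g(0)=0$, whereas you apply the double-angle identity once more to reach $\cosh(\alpha x)>1$ directly, which is marginally cleaner but not a genuinely different argument.
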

\begin{proof}
	By similar way and similar arguments as in the proof of the previous theorem, we are here reduced to study the sign of $g(x)\coloneqq \sinh(2\alpha x)-2\sinh(\alpha x)$ for $x>0$. Obviously, $g{'}(x)\coloneqq 2\alpha\cosh(2\alpha x)-2\alpha\cosh(\alpha x)>0$ and so $g(x)>g(0)=0$, for any $x>0$ so concluding the proof.
\end{proof}

\begin{thm}
	The means $M_2$ and $M_4$ are both strictly $(A,G)$-sub-stabilizable.
\end{thm}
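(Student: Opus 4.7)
The plan is to mirror the structure of the two preceding proofs. Applying Lemma~\ref{lemma-AG} together with the substitution \eqref{def-fM}, the strict inequality $\cR(A,M,G)(a,b) < M(a,b)$ for homogeneous symmetric $M$ is equivalent to
\bes
    f_A(x/2)\,f_M(x/2) < f_M(x), \qquad x > 0.
\ees
Since $f_A(x/2) = \cosh(x/2)$ and $\sinh x = 2\sinh(x/2)\cosh(x/2)$, substituting the formulas for $f_{M_2}$ and $f_{M_4}$ from \eqref{M} allows cancellation of the common positive factor $\sqrt{2}\sinh(x/2)\cosh(x/2)$, and the problem reduces to the pair of scalar inequalities
\bes
    \arctan(x\sqrt{2}) < 2\arctan\bigl(x\sqrt{2}/2\bigr), \qquad \arcsinh(x\sqrt{2}) < 2\arcsinh\bigl(x\sqrt{2}/2\bigr),
\ees
valid for $x > 0$, the first corresponding to $M_2$ and the second to $M_4$.

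Each of these would be settled by the routine ``$g(0) = 0$ with $g'>0$'' scheme already used in the two preceding theorems. For the first, I would set $g_2(x) = 2\arctan(x\sqrt{2}/2) - \arctan(x\sqrt{2})$, so that
\bes
    g_2'(x) = \frac{\sqrt{2}}{1 + x^2/2} - \frac{\sqrt{2}}{1 + 2x^2},
\ees
which is strictly positive for $x > 0$ since $1 + x^2/2 < 1 + 2x^2$. For the second, I would set $g_4(x) = 2\arcsinh(x\sqrt{2}/2) - \arcsinh(x\sqrt{2})$; the derivative is
\bes
    g_4'(x) = \sqrt{2}\Bigl(\frac{1}{\sqrt{1 + x^2/2}} - \frac{1}{\sqrt{1 + 2x^2}}\Bigr) > 0
\ees
by the same comparison. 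Combined with $g_2(0) = g_4(0) = 0$, this yields both target inequalities.

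There is no real obstacle here: once Lemma~\ref{lemma-AG} is invoked and the common factor $\sinh(x/2)\cosh(x/2)$ is extracted, the content of the theorem collapses to the elementary fact that $x^2/2 < 2x^2$ for $x \neq 0$. The only care needed is in matching the numerical constant $\sqrt{2}$ in the numerators of $f_{M_2}$ and $f_{M_4}$ against the doubling identity when writing the right-hand side. The betweenness requirement of Definition~\ref{defsubsuperst} is covered for $M_4$ directly by $L < M_4 < A$ in Proposition~\ref{prop-comM}, and for $M_2$ on the lower side by $G < L < M_2$ from the same source.
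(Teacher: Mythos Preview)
Your reduction via Lemma~\ref{lemma-AG} and the substitution \eqref{def-fM}, followed by the derivative-sign argument on $g_2$ and $g_4$, is exactly the template the paper has in mind when it says the proof is ``similar to the previous proofs'' and omits the details. The computation of the inequalities $\arctan(x\sqrt{2})<2\arctan(x\sqrt{2}/2)$ and $\arcsinh(x\sqrt{2})<2\arcsinh(x\sqrt{2}/2)$ is correct, and the strict inequality $\cR(A,M_i,G)<M_i$ for $i=2,4$ is fully established.

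There is, however, a genuine gap in the betweenness verification for $M_2$, and it is not one you can close. You note only the lower bound $G<L<M_2$; the required upper bound $M_2\le A$ is missing, and in fact Proposition~\ref{prop-comM}\,(\ref{prop-comM-a}) states explicitly that $A$ and $M_2$ are \emph{not comparable}. This is confirmed by the asymptotic expansion in Proposition~\ref{prop-exp-Mi}, where $M_2(x-t,x+t)\sim x+\tfrac{1}{3}t^2x^{-1}+\dots>x=A(x-t,x+t)$ near the diagonal. Hence $M_2$ is not between $G$ and $A$, and under Definition~\ref{defsubsuperst} the claim of $(A,G)$-sub-stabilizability cannot hold for $M_2$ as stated; this appears to be an internal inconsistency in the paper rather than a flaw in your argument. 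For $M_4$ your proof is complete, since $G<L<M_4<A$ is available from Proposition~\ref{prop-comM}.
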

\begin{proof}
	It is also similar to the previous proofs. The details are straightforward and therefore omitted here.
\end{proof}

\section{Asymptotic analysis of new means}




\subsection{Asymptotic expansions of means from \cite{RaissRezgui-2019}}

Let us find asymptotic expansions of means from the Section 1.

The most used result is the expansion for the power of an asymptotic series, which we recall here.

\begin{lemma}[\cite{ChenElVu-2013}]
	\label{lemma-power}
	Let
	$$
		g(x)\sim \sum_{n=0}^\infty a_n x^{-n}
	$$
	be a given asymptotic expansion (for $x\to\infty$) of $g(x)$ with $a_0\neq0$. Then for all real $r$ it holds
	$$
		[g(x)]^{r}\sim \sum_{n=0}^\infty P[n,r,\ba]x^{-n},
	$$
	where $P[0,r,\ba]=a_0^r$ and
	\bes
	        P[n,r,\ba]=\frac1{na_0}\sum_{k=1}^n[k(1+r)-n]a_kP[n-k,r,\ba],\quad n\in\bN.
	\ees
\end{lemma}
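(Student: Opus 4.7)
The plan is to exploit the first-order identity $g\,f' = r\,f\,g'$ satisfied by $f\coloneqq g^r$ (obtained from $f' = r g^{r-1} g'$ by multiplying through by $g$), which is legitimate because the hypothesis $a_0\neq 0$ forces $g$ to be bounded away from zero for all sufficiently large $x$. Expanding both sides as Cauchy products of asymptotic series and matching coefficients of like powers of $x^{-1}$ should produce a linear equation for the unknown coefficient $P[n,r,\ba]$ whose solution is the stated recurrence.

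Before matching coefficients I first have to know that $f$ does admit an asymptotic expansion in the scale $\{x^{-n}\}_{n\in\bN_0}$, and identify its leading coefficient. Factor $g(x)=a_0\bigl(1+h(x)\bigr)$, so that $h\sim\sum_{n\geq1}(a_n/a_0)x^{-n}$ has no constant term and therefore $h^k=\O(x^{-k})$. The map $u\mapsto(1+u)^r$ is analytic at $u=0$, so formal substitution of the series for $h$ into its Maclaurin series collects only finitely many contributions into each fixed power $x^{-n}$; this yields an asymptotic expansion $f\sim a_0^r(1+h)^r\sim\sum_{n\geq0}b_n x^{-n}$ with $b_0=a_0^r=P[0,r,\ba]$. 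Termwise differentiation of an expansion in this scale is valid provided $f'$ admits such an expansion itself, which is assured by differentiating $a_0^r(1+h)^r$ directly.

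Now substitute $f'\sim-\sum_{n\geq1}n\,b_n\,x^{-n-1}$ and $g'\sim-\sum_{n\geq1}n\,a_n\,x^{-n-1}$ into $gf'=rfg'$ and read off the coefficient of $x^{-m-1}$:
\[
    \sum_{n=1}^{m} n\, a_{m-n}\, b_n \;=\; r\sum_{n=1}^{m} n\, a_n\, b_{m-n},\qquad m\in\bN.
\]
Reindex the left-hand sum by $k=m-n$ to rewrite it as $\sum_{k=0}^{m-1}(m-k)\,a_k\,b_{m-k}$, split off the $k=0$ term $m\,a_0\,b_m$, and combine the remaining $k=1,\dots,m-1$ terms with the right-hand side (written as $\sum_{k=1}^{m} r k\,a_k\,b_{m-k}$). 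For $1\leq k\leq m-1$ the coefficient of $a_k b_{m-k}$ becomes $r k-(m-k)=k(1+r)-m$, and the isolated $k=m$ term on the right contributes $r m = m(1+r)-m$, fitting the same pattern. This gives
\[
    m\,a_0\,b_m \;=\; \sum_{k=1}^{m}\bigl[k(1+r)-m\bigr]\,a_k\,b_{m-k},
\]
which is the asserted formula after division by $m a_0$.

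The genuinely delicate step is the very first one: justifying the existence of an asymptotic expansion for $f$ and the termwise differentiation of asymptotic series in the scale $\{x^{-n}\}$; both rely on standard material on operations with asymptotic expansions (see \cite{Erd}). Once these are in hand, the heart of the argument is the single reindexing $k=m-n$, and the rest is bookkeeping.
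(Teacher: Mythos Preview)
The paper does not prove this lemma; it merely quotes it from \cite{ChenElVu-2013}. So there is no in-paper proof to compare against, and your argument must be judged on its own merits.

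Your algebraic derivation of the recurrence from the identity $g\,f'=r\,f\,g'$ is correct: the reindexing and the collection of the $k=m$ term are done properly, and the final formula matches the statement. The existence step---factoring out $a_0$ and substituting the expansion of $h$ into the Maclaurin series of $(1+u)^r$---is the standard justification and is fine.

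The only point that deserves a cleaner treatment is the differentiation. You write that termwise differentiation of the expansion of $f$ is legitimate because $f'$ itself admits such an expansion, ``assured by differentiating $a_0^r(1+h)^r$ directly''. But that step needs an asymptotic expansion for $h'=g'/a_0$, and the hypothesis gives you only an expansion for $g$, not for $g'$; asymptotic expansions are not in general differentiable termwise. The clean fix is to decouple the two tasks: first, your substitution argument already shows that $f=g^r$ has an asymptotic expansion $\sum b_n x^{-n}$, and those coefficients coincide with the coefficients of the \emph{formal} power series $\big(\sum a_n t^n\big)^r$ in $t=x^{-1}$. Second, the identity $G\,F'=r\,F\,G'$ holds for formal power series with formal differentiation, where no analytic justification is needed; matching coefficients there yields exactly your recurrence for the $b_n$. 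This avoids any appeal to differentiability of $g$ or $f$ as functions. With that adjustment the proof is complete.
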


We assume all sequences are enumerated from 0.
Here $P[n,r,\ba]$ denotes the coefficient by the $x^{-n}$ in the $r$-th power of series assigned to a sequence $\ba=(a_i)_{i\in\bN_0}$.

\begin{prop}\label{prop-coeff-Lalpha}
Complete asymptotic expansion of mean $L_\alpha$, $\abs{\alpha}\le1$, is given by:
\bes
	L_\alpha (x-t,x+t)\sim x\sum_{n=0}^\infty
		2\alpha\sum_{k=0}^n
		\binom{\alpha}{n-k}(-1)^{n-k} P[k,-1,(\tbinom{2\alpha}{2i+1})_{i\in\bN_0}] t^{2n} x^{-2n}.
\ees
\end{prop}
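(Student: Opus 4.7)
The plan is to obtain the expansion by explicit algebraic manipulation of the closed form \eqref{mean-def-Lalpha}, followed by a Cauchy product of two known asymptotic series. I assume $\alpha\neq 0$ throughout; the limit case $\alpha=0$ reduces to the logarithmic mean $L$ and can be treated separately by continuity.

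Substituting $a=x-t$, $b=x+t$ gives $b-a=2t$, $(ab)^\alpha=x^{2\alpha}(1-t^2/x^2)^\alpha$, and
\[
  b^{2\alpha}-a^{2\alpha}=x^{2\alpha}\bigl((1+t/x)^{2\alpha}-(1-t/x)^{2\alpha}\bigr).
\]
In the binomial expansion of the bracket the even powers of $t/x$ cancel while the odd ones double, leaving $2\sum_{i=0}^\infty\binom{2\alpha}{2i+1}(t/x)^{2i+1}$. Cancelling the common factor $2tx^{2\alpha}$ in numerator and denominator yields the clean identity
\[
  L_\alpha(x-t,x+t)=2\alpha x\,\frac{(1-t^2/x^2)^\alpha}{\sum_{i=0}^\infty\binom{2\alpha}{2i+1}(t/x)^{2i}}.
\]

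Next I expand each factor as an asymptotic series in the auxiliary variable $y=t^2/x^2$. For the numerator the binomial theorem gives directly
\[
  (1-t^2/x^2)^\alpha\sim\sum_{m=0}^\infty\binom{\alpha}{m}(-1)^m\,t^{2m}x^{-2m}.
\]
For the denominator I invoke Lemma~\ref{lemma-power} with exponent $r=-1$ and sequence $\ba'=\bigl(\binom{2\alpha}{2i+1}\bigr)_{i\in\bN_0}$; its leading coefficient $a'_0=2\alpha$ is nonzero by assumption, and the recursion defining $P[k,r,\ba]$ is purely algebraic in the coefficient sequence, so it applies verbatim to a formal series in $y$, producing coefficients $P[k,-1,\ba']$ for the powers $t^{2k}x^{-2k}$.

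Finally I form the Cauchy product of the two series and collect like powers by the total exponent $n=m+k$: the coefficient of $t^{2n}x^{-2n}$ in the product is the convolution $\sum_{k=0}^n\binom{\alpha}{n-k}(-1)^{n-k}P[k,-1,\ba']$, and the overall prefactor $2\alpha x$ then delivers exactly the claimed formula. As a sanity check, for $n=0$ the inner sum equals $(2\alpha)^{-1}$, so the leading term reduces to the expected $x$. The main obstacle is purely bookkeeping: tracking the two convolution indices and justifying that Lemma~\ref{lemma-power}, stated for series in $1/x$, applies equally to a series in $y=t^2/x^2$ --- which is legitimate because $t$ is treated as a fixed parameter while $x\to\infty$, so the resulting expression is a bona fide asymptotic series in $x^{-1}$ with all odd-power coefficients vanishing (cf.\ Remark~\ref{rem-tt-st}).
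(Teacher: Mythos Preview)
Your proof is correct and follows essentially the same route as the paper's own argument: both substitute $a=x-t$, $b=x+t$ into the closed form, combine $(x-t)^\alpha(x+t)^\alpha$ into $(1-t^2/x^2)^\alpha$, reduce the denominator difference to twice the odd-indexed binomial sum, then expand the numerator by the binomial series, invert the denominator via Lemma~\ref{lemma-power} with $r=-1$, and finish with a Cauchy product. Your added remarks on the $\alpha=0$ limit, the applicability of the lemma to the variable $y=t^2/x^2$, and the $n=0$ sanity check are reasonable clarifications not present in the paper, but they do not change the argument.
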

\begin{proof}
	\begin{align*}
		L_\alpha (x-t,x+t) &= \frac{4\alpha t (x-t)^\alpha (x+t)^\alpha}{(x+t)^{2\alpha}-(x-t)^{2\alpha}}\\
			&=4\alpha t \left(1-\frac{t}x\right)^\alpha \left(1+\frac{t}x\right)^\alpha
				\left[ \left(1+\frac{t}x\right)^{2\alpha}- \left(1-\frac{t}x\right)^{2\alpha}\right]^{-1} \\
			&\sim 4\alpha t \left(1-\frac{t^2}{x^2}\right)^\alpha
				\left[ \sum_{k=0}^\infty \binom{2\alpha}{k} t^k x^{-k}
					-\sum_{k=0}^\infty \binom{2\alpha}{k}(-1)^k t^k x^{-k} \right]^{-1}\\
			&\sim 2\alpha t \left(1-\frac{t^2}{x^2}\right)^\alpha
				\left[ \sum_{k=0}^\infty \binom{2\alpha}{2k+1} t^{2k+1} x^{-(2k+1)}
					 \right]^{-1}\\
			&\sim 2\alpha t \sum_{j=0}^\infty  \binom{\alpha}{j}(-1)^j t^{2j} x^{-2j}
				\cdot \frac{x}{t} \sum_{k=0}^\infty
					P[k,-1,( \tbinom{2\alpha}{2i+1} )_{i\in\bN_0} ]
					 t^{2k} x^{-2k}\\
			&\sim 2\alpha x \sum_{n=0}^\infty \sum_{k=0}^n \binom{\alpha}{n-k}(-1)^{n-k}
				P[k,-1,( \tbinom{2\alpha}{2i+1} )_{i\in\bN_0} ] t^{2n} x^{-2n}.
	\end{align*}
\end{proof}

The beginning of the asymptotic expansion:
\be\label{exp-Mu-cosh}
\begin{aligned}
	L_\alpha (x&-t,x+t)\sim  x-\tfrac13(2\alpha^2+1) t^2 x^{-1}
		+\tfrac{2}{45}(\alpha-1)(\alpha+1)(7\alpha^2+2) t^4 x^{-3}\\
		&\quad-\tfrac{2}{945}(\alpha-1)(\alpha+1)(62\alpha^4-85\alpha^2-22) t^6 x^{-5}\\
		&\quad+\tfrac{2}{14175}(\alpha-1)(\alpha+1)(381\alpha^6-1169\alpha^4+889\alpha^2+214)t^8 x^{-7}
		+\dots
\end{aligned}
\ee
For $\alpha=1$ and $\alpha=\frac12$ these coefficients coincide with coefficients obtained in paper \cite{ElVu-2014-09} for harmonic and geometric mean respectively.

\begin{prop}\label{prop-coeff-Salpha}
Complete asymptotic expansion of mean $S_\alpha$, $\abs{\alpha}\le1$, is given by:
\bes
	S_\alpha (x-t,x+t)\sim x\sum_{n=0}^\infty
		 \alpha P[n,-1,(D_i)_{i\in\bN_0}] t^{2n} x^{-2n},
\ees
where
\bes
	D_n=\sum_{m=0}^n \frac{(-1)^m}{2m+1} P[n-m,2m+1,(C_i)_{i\in\bN_0}],
\ees
and
\bes
	C_n=\sum_{k=0}^n\binom{\alpha}{2k+1}P[n-k,-1,(\tbinom{\alpha}{2i})_{i\in\bN_0}].
\ees
\end{prop}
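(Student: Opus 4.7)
The plan is to mirror the structure of the proof of Proposition 3.2 (for $L_\alpha$), but with one extra compositional layer because of the arctan. Set $u=t/x$, which plays the role of the small parameter. The starting point is
\bes
	S_\alpha(x-t,x+t)=\frac{\alpha t}{\arctan\dfrac{(x+t)^\alpha-(x-t)^\alpha}{(x+t)^\alpha+(x-t)^\alpha}}.
\ees
Factor $x^\alpha$ from numerator and denominator of the fraction inside arctan, then expand $(1\pm u)^\alpha$ by the binomial series and split into even/odd parts. The denominator becomes $2D(u)$ and the numerator becomes $2u\,N(u)$, where
\bes
	D(u)=\sum_{k\ge0}\binom{\alpha}{2k}u^{2k},\qquad N(u)=\sum_{k\ge0}\binom{\alpha}{2k+1}u^{2k},
\ees
so the argument of arctan equals $u\cdot N(u)/D(u)$.

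Next, apply Lemma \ref{lemma-power} with $r=-1$ (viewing the series in $u^2$ formally as an asymptotic expansion in $u^{-2}\to\infty$, as done in the proof of Proposition \ref{prop-coeff-Lalpha}) to invert $D(u)$. Cauchy-multiplying the result by $N(u)$ produces $N(u)/D(u)=\sum_{n\ge0}C_n u^{2n}$ with
\bes
	C_n=\sum_{k=0}^n\binom{\alpha}{2k+1}P[n-k,-1,(\tbinom{\alpha}{2i})_{i\in\bN_0}],
\ees
matching the definition in the proposition.

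Now substitute $y=u\sum_{n\ge0}C_nu^{2n}$ into the Taylor series $\arctan y=\sum_{m\ge0}\frac{(-1)^m}{2m+1}y^{2m+1}$. Compute $y^{2m+1}=u^{2m+1}\bigl(\sum_n C_n u^{2n}\bigr)^{2m+1}$ by another application of Lemma \ref{lemma-power}, this time with $r=2m+1$ and base sequence $(C_i)$. Changing summation index $n=k+m$ and gathering the coefficient of $u^{2n+1}$ yields
\bes
	\arctan y = u\sum_{n\ge0}D_n u^{2n},\qquad D_n=\sum_{m=0}^n\frac{(-1)^m}{2m+1}P[n-m,2m+1,(C_i)_{i\in\bN_0}],
\ees
again matching the proposition.

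Finally, since $u=t/x$, one has $\alpha t/(u\sum_n D_n u^{2n})=\alpha x/\sum_n D_n u^{2n}$. A last application of Lemma \ref{lemma-power} with $r=-1$ to the sequence $(D_i)$ delivers
\bes
	S_\alpha(x-t,x+t)\sim \alpha x\sum_{n\ge0}P[n,-1,(D_i)_{i\in\bN_0}]\,t^{2n}x^{-2n},
\ees
as required. The main obstacle is purely bookkeeping: the proof is a triple composition (rational function, then arctan, then reciprocal), each handled by Lemma \ref{lemma-power}, so the challenge is to organize the three Cauchy convolutions without index errors. No analytic subtlety arises because all series are formal in $u^2$ and the leading term of each series to be inverted or raised to a power equals $1$.
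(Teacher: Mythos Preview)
Your proof is correct and follows essentially the same route as the paper's: binomial expansion of $(1\pm t/x)^\alpha$, separation into even/odd parts, inversion of the even part via Lemma~\ref{lemma-power} to obtain $C_n$, composition with the arctan Taylor series and another application of the lemma to obtain $D_n$, and a final inversion to get the stated expansion. The only cosmetic difference is your use of the single variable $u=t/x$, which slightly streamlines the bookkeeping.
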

\begin{proof}
	\begin{align*}
		S_\alpha &(x-t,x+t)
		= \alpha t \left[ \arctan
			\frac{(x+t)^\alpha-(x-t)^\alpha}{(x+t)^\alpha+(x-t)^\alpha} \right]^{-1}\\
		&= \alpha t \left[ \arctan
			\frac{\left(1+\frac{t}{x}\right)^\alpha-\left(1-\frac{t}{x}\right)^\alpha}{\left(1+\frac{t}{x}\right)^\alpha + \left(1-\frac{t}{x}\right)} \right]^{-1}\\
		&\sim \alpha t \left[ \arctan \frac{\sum_{k=0}^\infty\binom{\alpha}{k}t^{k}x^{-k}-\sum_{k=0}^\infty\binom{\alpha}{k} (-1)^k t^{k}x^{-k}}{\sum_{k=0}^\infty\binom{\alpha}{k}t^{k}x^{-k}+\sum_{k=0}^\infty\binom{\alpha}{k} (-1)^k t^{k}x^{-k}} \right]^{-1}\\
		&\sim \alpha t \left[ \arctan \frac{\sum_{k=0}^\infty\binom{\alpha}{2k+1}t^{2k+1}x^{-(2k+1)}}{\sum_{k=0}^\infty\binom{\alpha}{2k}t^{2k}x^{-2k}} \right]^{-1}\\
		&\sim  \alpha t \left[ \arctan \left(
			\frac{t}{x}\sum_{k=0}^\infty\binom{\alpha}{2k+1}t^{2k}x^{-2k}
			\cdot \sum_{j=0}^\infty P[j,-1,(\tbinom{\alpha}{2i})_{i\in\bN_0}]t^{2j}x^{-2j}
			\right) \right]^{-1}\\
		&\sim  \alpha t \Biggl[ \arctan \biggl(
			\frac{t}{x} \sum_{n=0}^\infty \underbrace{\sum_{k=0}^n \binom{\alpha}{2k+1}
				P[n-k,-1,(\tbinom{\alpha}{2i})_{i\in\bN_0}]}_{C_n} t^{2n}x^{-2n}
			\biggr) \Biggr]^{-1}\\
		&\sim \alpha t\left[
			\sum_{m=0}^\infty \frac{(-1)^m}{2m+1} \left(
				\frac{t}{x} \sum_{n=0}^\infty C_n t^{2n} x^{-2n}
				\right)^{2m+1}
			\right]^{-1}\\
		&\sim \alpha t\left[
			\sum_{m=0}^\infty \frac{(-1)^m}{2m+1} \left(\frac{t}{x}\right)^{2m+1}
				\sum_{j=0}^\infty P[j,2m+1,(C_i)_{i\in\bN_0}] t^{2j}x^{-2j}
			\right]^{-1}\\
		&\sim \alpha t\Biggl[ \frac{t}{x} \sum_{n=0}^\infty \underbrace{\sum_{m=0}^n
			\frac{(-1)^m}{2m+1} P[n-m,2m+1,(C_i)_{i\in\bN_0}]}_{D_n} t^{2n} x^{-2n}
			\Biggr]^{-1}\\
		&\sim \alpha x \sum_{n=0}^\infty P[n,-1,(D_i)_{i\in\bN_0}] t^{2n} x^{-2n}.
	\end{align*}
\end{proof}

Although computed using several recursively defined sequences, the coefficients have a nice form. The beginning of the asymptotic expansion is given by
\be\label{exp-Mu-1cosh}
\begin{aligned}
	S_\alpha &(x-t,x+t)\sim  x+\tfrac13(2\alpha^2-1) t^2 x^{-1}
		-\tfrac{2}{45}(5\alpha^4-5\alpha^2+2) t^4 x^{-3}\\
		&\quad+\tfrac{2}{945}(86\alpha^6-105\alpha^4+63\alpha^2-22) t^6 x^{-5}\\
		&\quad-\tfrac{2}{14175}\bigl(214+5\alpha^2(\alpha-1)(\alpha+1)(135-159\alpha^2+271\alpha^4)\bigr)t^8 x^{-7}
		+\dots
\end{aligned}
\ee
For $\alpha=\frac12$ and $\alpha=1$ these coefficients coincide with the coefficients from \cite{Vu} for the first and the second Seiffert mean.

We may state more general result for means of the type \eqref{mean-type-mg}.
\begin{thm}\label{thm-exp-mu-mg}
Assume that the odd function $\mu\colon\bR\to\bR$ has the following expansion
\be\label{thm-mu-assumptions}
	\mu(x)\sim  \sum_{n=0}^\infty c_n x^{2n+1},\ \text{as}\ x\to0,
\ee
with $c_0=1$. Then mean $m_g$ defined in \eqref{mean-type-mg} has the following expansion
\be\label{thm-exp-mg}
	m_g(x-t,x+t)\sim x\sum_{m=0}^\infty P[m,-1,(E_i)_{i\in\bN_0}]t^{2m} x^{-2m},
	\ \text{as}\ x\to\infty,
\ee
where
\bes
	E_m=\sum_{n=0}^m c_n 2^{2n} P[m-n,2n+1, (\tfrac1{2i+1})_{i\in\bN_0}].
\ees
\end{thm}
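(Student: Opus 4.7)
The plan is to reduce the asymptotic expansion of $m_g(x-t,x+t)$ to a series inversion, by composing the given expansion of $\mu$ with the Taylor expansion of the inverse hyperbolic tangent, and then applying Lemma \ref{lemma-power} twice. First I would use the definition of $g$ together with the oddness of $\mu$ to rewrite, for $t>0$,
\[
  m_g(x-t,x+t)=\frac{2t}{\mu\bigl(\ln\tfrac{x+t}{x-t}\bigr)},
\]
so the whole problem is to expand the denominator and then invert it. Since $\ln\frac{x+t}{x-t}=2\arctanh(t/x)$, one has the exact convergent expansion
\[
  \ln\tfrac{x+t}{x-t}=\frac{2t}{x}\sum_{k=0}^{\infty}\frac{1}{2k+1}\,\frac{t^{2k}}{x^{2k}},
\]
which plays the role of the inner series, with leading coefficient $1$.

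Next I would substitute this series into the assumed expansion \eqref{thm-mu-assumptions}. For each odd power $(2n+1)$, Lemma \ref{lemma-power} applied to the bracketed factor gives
\[
  \Bigl(\ln\tfrac{x+t}{x-t}\Bigr)^{2n+1}
  =\Bigl(\tfrac{2t}{x}\Bigr)^{2n+1}\sum_{m=0}^{\infty}P\!\bigl[m,\,2n+1,\,(\tfrac{1}{2i+1})_{i\in\bN_0}\bigr]\,t^{2m}x^{-2m}.
\]
Multiplying by $c_n$, summing over $n$, and reorganizing the double sum by the total exponent $2k=2n+2m$ of $t/x$ yields
\[
  \mu\bigl(\ln\tfrac{x+t}{x-t}\bigr)
  \sim\frac{2t}{x}\sum_{m=0}^{\infty}E_m\,t^{2m}x^{-2m},\qquad
  E_m=\sum_{n=0}^{m}c_n\,2^{2n}\,P\!\bigl[m-n,\,2n+1,\,(\tfrac{1}{2i+1})_{i\in\bN_0}\bigr],
\]
which is exactly the sequence $(E_m)$ appearing in the statement.

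Finally, since $c_0=1$ and $P[0,1,(\tfrac{1}{2i+1})]=1$, we have $E_0=1\ne 0$, so a second application of Lemma \ref{lemma-power} with $r=-1$ gives
\[
  \frac{2t}{\mu\bigl(\ln\tfrac{x+t}{x-t}\bigr)}
  \sim x\sum_{m=0}^{\infty}P\!\bigl[m,-1,(E_i)_{i\in\bN_0}\bigr]\,t^{2m}x^{-2m},
\]
which is the expansion \eqref{thm-exp-mg}. The main technical point is not the algebra but the justification that substituting one asymptotic series (in $1/x$) into another (in $y\to 0$) yields a valid asymptotic series; this follows because the inner expansion for $y=\ln\frac{x+t}{x-t}$ is actually convergent for $|t|<x$ with $y\to 0$ as $x\to\infty$, and the asymptotic expansion of $\mu$ at $0$ can be truncated with a controlled remainder before substitution. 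Verifying that $E_0=1$ (and hence that the second use of Lemma \ref{lemma-power} is legitimate) is what makes the inversion step clean and ensures the leading term equals $x$, consistent with $m_g$ being a mean.
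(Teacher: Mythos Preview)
Your proposal is correct and follows essentially the same route as the paper: expand $\ln\tfrac{x+t}{x-t}$ as $\tfrac{2t}{x}\sum_{k\ge0}\tfrac{1}{2k+1}(t/x)^{2k}$, raise it to odd powers via Lemma~\ref{lemma-power}, collect into the coefficients $E_m$, and then invert with a second application of Lemma~\ref{lemma-power} at $r=-1$. If anything, your write-up is slightly more explicit than the paper in justifying the substitution (convergence of the inner series) and in checking $E_0=1$ before inverting.
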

\begin{proof}
	Observe the expression $\lvert \ln \frac{a}{b} \rvert$ when $a=x-t$ and $b=x+t$,
	which under assumption $t>0$ and for $x$ large enough, is equal to
	\begin{align*}
		\ln\frac{x+t}{x-t}
			&\sim\ln \left(1+\frac{t}{x}\right)-\ln \left(1-\frac{t}{x}\right)
			=\sum_{k=1}^\infty\frac{(-1)^{k+1}}{k} t^k x^{-k}
			+\sum_{k=1}^\infty\frac{1}{k} t^k x^{-k}\\
			&\sim 2\sum_{k=0}^\infty\frac{1}{2k+1} t^{2k+1} x^{-(2k+1)}.
	\end{align*}
	Now we have, for any $t$,
	\begin{align*}
		m_g&(x-t,x+t)\sim 2t \left[\mu \left(
			2\sum_{k=0}^\infty\frac{1}{2k+1} t^{2k+1} x^{-(2k+1)}
			\right) \right]^{-1} \\
			&\sim 2t \left[ \sum_{n=0}^\infty c_n \left(
				2 \sum_{k=0}^\infty\frac{1}{2k+1} t^{2k+1} x^{-(2k+1)}
				\right)^{2n+1} \right]^{-1} \\
			&\sim 2t \left[ \sum_{n=0}^\infty c_n t^{2n+1} x^{-2n-1} 2^{2n+1}
				\sum_{k=0}^\infty P[k,2n+1,(\tfrac1{2i+1})_{i\in\bN_0}] t^{2k}x^{-2k}
				\right]^{-1} \\
			&\sim 2t \Biggl[ 2 t x^{-1} \sum_{m=0}^\infty
				\underbrace{\sum_{n=0}^m c_n 2^{2n} P[m-n,2n+1,(\tfrac1{2i+1})_{i\in\bN_0}]}_{E_m}
				 t^{2m} x^{-2m}
				\Biggr]^{-1} \\
			&\sim x \sum_{m=0}^\infty P[m,-1,(E_i)_{i\in\bN_0}] t^{2m} x^{-2m}.
	\end{align*}
\end{proof}

\begin{exa}
	\begin{enumerate}
		\item Asymptotic expansion of mean $L_\alpha$ can also be deduced form Theorem \ref{thm-exp-mu-mg}. Let $\mu(x)=\frac1\alpha \sinh(\alpha x)$, i.e.\ let $c_n=\frac{\alpha^{2n}}{(2n+1)!}$ in \eqref{thm-mu-assumptions}. Then using formula \eqref{thm-exp-mg} we may also obtain the coefficients from \eqref{exp-Mu-cosh}.
		\item Let $u(y)=\frac1{\cosh(\alpha y)}$ whose asymptotic expansion as $y\to0$ is
		$u(y)\sim\sum_{n=0}^\infty P[n,-1,(\tfrac{\alpha^{2i}}{(2i)!})_{i\in\bN_0}] y^{2n}$. Then, with $c_n=\frac1{2n+1}P[n,-1,(\tfrac{\alpha^{2i}}{(2i)!})_{i\in\bN_0}]$
		in \eqref{thm-mu-assumptions}, formula \eqref{thm-exp-mg} gives the coefficients as in \eqref{exp-Mu-1cosh}.
	\end{enumerate}
\end{exa}

Regarding the rest of the means from Section 1, similar computations lead to their asymptotic expansions. With prior use of the arctangent addition formula in the denominator of $M_3$ and the inverse hyperbolic sine addition (subtraction) formula for the denominator of $M_5$, with application of Lemma \ref{lemma-power} and Taylor series expansion of logarithmic, arctangent or inverse hyperbolic sine function in variable $t/x$,
we obtain the following expansions.

\begin{prop}\label{prop-exp-Mi}
As $x\to \infty$, for means defined in \eqref{mean-def-Mi}, the following expansions hold:
\begin{align*}
	M_1 (x-t,x+t)&\sim
			x
			+\abs{t}
			-\tfrac23 t^2 x^{-1}
			+\tfrac13 \abs{t}^3 x^{-2}
			-\tfrac{28}{45} t^4 x^{-3}
			+\tfrac{37}{45} \abs{t}^5 x^{-4}\\
			&\qquad-\tfrac{1369}{945} t^6 x^{-5}
		+\dots\\
	M_2 (x-t,x+t)&\sim
			x
			+\tfrac13 t^2 x^{-1}
			-\tfrac29 t^4 x^{-3}
			+\tfrac{14}{135} t^6 x^{-5}
			-\tfrac{122}{945} t^8 x^{-7}
		+\dots\\
	M_3(x-t,x+t) &\sim
			x
			+\abs{t}
			-\tfrac13 \abs{t}^3 x^{-2}
			+\tfrac{4}{15} t^4 x^{-3}
			-\tfrac{13}{45} \abs{t}^5 x^{-4}
			+\tfrac19 t^6 x^{-5}
		+\dots\\
	M_4(x-t,x+t)&\sim
			x
			-\tfrac16 t^4 x^{-3}
			+\tfrac{8}{315} t^6 x^{-5}
			-\tfrac{367}{4536} t^8 x^{-7}
		+\dots\\
	M_5(x-t,x+t) &\sim
			x
			+\tfrac12 \abs{t}
			-\tfrac14 t^2 x^{-1}
			-\tfrac16 \abs{t}^3 x^{-2}
			+\tfrac5{48} t^4 x^{-3}
			-\tfrac73{360} \abs{t}^5 x^{-4}\\
			&\qquad+\tfrac{1033}{10 080} t^6 x^{-5}
		+\dots\\
	M_{\alpha,r}(x-t,x+t) &\sim
			x
			+\alpha \abs{t}
			+\tfrac13(\alpha^2+2r\alpha-1) t^2 x^{-1}
			-\tfrac13 r\alpha(2r+\alpha) \abs{t}^3 x^{-2}\\
			&\qquad
			-\tfrac{1}{45} \left(\alpha  (\alpha +2 r) \left(\alpha ^2-18 r^2+2 \alpha  r-5\right)+4\right) t^4 x^{-3}\\
			&\qquad -\tfrac{1}{45} \alpha  r (\alpha +2 r)
				\bigl(3 (2 r-\alpha ) (\alpha +4 r)+10\bigr) \abs{t}^5 x^{-4}
		+\dots
\end{align*}
\end{prop}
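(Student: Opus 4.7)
The plan is to reduce every expansion to a single mechanical procedure: substitute $a=x-t$, $b=x+t$, expand the denominator as a power series in $t/x$, apply Lemma~\ref{lemma-power} with $r=-1$ to invert it, and multiply by the numerator. The common input is the expansion
\[
\ln\frac{x+t}{x-t}\sim 2\sum_{k=0}^\infty\frac{1}{2k+1}\,t^{2k+1}x^{-(2k+1)},
\]
already derived inside the proof of Theorem~\ref{thm-exp-mu-mg}. The presence of $\abs{\ln b-\ln a}$ (together with $\abs{b-a}$) in $M_1$, $M_3$, $M_5$ and $M_{\alpha,r}$ forces us to take $\abs{t}$ rather than $t$ in front of the series, which is precisely what produces the odd powers of $\abs{t}$ visible in the displayed expansions for those means.

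For $M_2$ and $M_4$ one simply composes the logarithm series above with the Taylor expansions of $\arctan(y/\sqrt2)$ and $\arcsinh(y/\sqrt2)$ at $y=0$; both outer functions are odd, so only even powers of $t$ survive and the expansion takes the form $x+\sum_n c_n t^{2n}x^{1-2n}$. For $M_1$ one composes instead with $\ln(1+y)$; now even and odd powers of $y=\abs{\ln b-\ln a}$ both appear, so after multiplication by $2\abs{t}$ and inversion via Lemma~\ref{lemma-power} the expansion contains alternating $t^{2n}$ and $\abs{t}^{2n+1}$ terms as stated. In each case Lemma~\ref{lemma-power} turns the iterated substitution into a single power series whose leading term is proportional to $t/x$, so that the overall factor of $x$ survives after inversion.

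For $M_3$ the raw denominator $2\arctan(1+u)-\pi/2$ with $u=\abs{\ln b-\ln a}$ does not vanish at $u=0$ as written, so one first rewrites it via the arctangent subtraction formula as
\[
2\bigl(\arctan(1+u)-\arctan 1\bigr)=2\arctan\frac{u}{2+u},
\]
which has a simple zero at $u=0$ and admits a clean Taylor expansion in $u$. For $M_5$ one uses the corresponding inverse hyperbolic sine identity
\[
\arcsinh(1+u)-\arcsinh 1=\arcsinh\!\bigl((1+u)\sqrt 2-\sqrt{1+(1+u)^2}\bigr),
\]
whose argument also vanishes linearly at $u=0$, and then proceeds exactly as for $M_1$.

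The parameterized case $M_{\alpha,r}$ requires one extra step: expand $(1+ru)^{(r+\alpha)/r}$ by Lemma~\ref{lemma-power} with exponent $(r+\alpha)/r$ applied to the series $1+ru$, subtract the constant $1$, factor out the leading $\abs{t}/x$, and invert. I expect this to be the main obstacle. The coefficients are polynomials in $\alpha$ and $r$ of increasing total degree, produced by several nested convolutions through Lemma~\ref{lemma-power}, so verifying explicit forms such as the $t^4x^{-3}$ and $\abs{t}^5 x^{-4}$ coefficients against the stated polynomials in $\alpha$ and $r$ is essentially a symbolic bookkeeping problem best handed to a computer algebra system. No new analytic ingredient is required beyond the template already employed in Propositions~\ref{prop-coeff-Lalpha} and~\ref{prop-coeff-Salpha}.
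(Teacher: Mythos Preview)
Your approach is essentially the same as the paper's: the authors also reduce everything to the expansion of $\ln\frac{x+t}{x-t}$, compose with the Taylor series of $\ln(1+\cdot)$, $\arctan$, or $\arcsinh$, invoke the arctangent addition formula for $M_3$ and the $\arcsinh$ subtraction formula for $M_5$ to isolate a simple zero, and then apply Lemma~\ref{lemma-power} for the required powers and inversion. There is no additional analytic idea in the paper beyond what you outline; the explicit coefficients are indeed obtained by routine (computer-assisted) series manipulation.
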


\subsection{Asymptotic expansion of the resultant mean-map}

Assuming that all means involved were bivariate, symmetric, homogeneous and had the asymptotic expansions as $x\to\infty$ of the following type
\begin{align}
	K(x-t,x+t) &\sim \sum_{n=0}^\infty a^K_n t^{2n}x^{-2n+1},\notag\\
	M(x-t,x+t) &\sim \sum_{n=0}^\infty a^M_n t^{2n}x^{-2n+1},\label{def-MNK-M}\\
	N(x-t,x+t) &\sim \sum_{n=0}^\infty a^N_n t^{2n}x^{-2n+1},\notag
\end{align}
in \cite{Mih-2023-sssa} we found the complete asymptotic expansion of the resultant mean--map:
\be\label{exp-resultant-evenoddpowers}
	R(x-t,x+t) = \cR(K,M,N)(x-t,x+t) \sim \sum_{n=0}^\infty a^R_n t^{2n}x^{-2n+1}.
\ee
Using coefficients of the resultant mean--map of the corresponding means, we found formula for calculating the coefficients in the asymptotic power series expansion of type \eqref{def-MNK-M} of stable mean, which, up to five terms reads as (\cite[formula (34)]{Mih-2023-sssa}):
\be\label{exp-Mtt-stable}
\begin{aligned}
	&M (x-t,x+t)= x+ a_1 t^2x^{-1}
		+  \tfrac1{6}a_1(1+a_1)(1-4a_1) t^4x^{-3}  \\
		&\quad+ \tfrac1{90} a_1(1+a_1)\left(6-31a_1+36a_1^2+64a_1^3\right) t^6x^{-5}  \\
		&\quad+ \tfrac1{2520} a_1 (1+a_1) \bigl(90 - 531 a_1  + 937 a_1 ^2 + 568 a_1 ^3 - 3088 a_1 ^4 - 2176 a_1^5 \bigr) t^8x^{-7}  \\
		&\quad +\O(x^{-9}).
\end{aligned}
\ee

With $K=B_p$ and $N=B_q$, we obtain coefficients in the asymptotic expansion \eqref{exp-resultant-evenoddpowers} of the resultant mean--map (consequence of \cite[formula (26)]{Mih-2023-sssa}):
 	\be\label{coeff-RBpNBq}
 	\begin{aligned}
 		&\cR(B_p, M,B_q)(x-t,x+t) = x + \tfrac{1}{8} (2 a_1^M+p+2q -3) t^2 x^{-1} \\
 			& \quad + \tfrac{1}{384} \Bigl(24 a_2^M+
 			12 a_1^M (-4 p q+p+2 q (q+1)+1) \\
			&\qquad -2 p^3+3 p^2+2 p (7-6 q)+4 q \left(-4 q^2+6 q+7\right)-39
 		\Bigr) t^4 x^{-3} +\O(x^{-5}).		
 	\end{aligned}
 	\ee
 	
As a consequence of the results of the above mentioned paper, specially relying on the form of the stable mean coefficients given in \eqref{exp-Mtt-stable}, in combination with the asymptotic expansion given in Proposition \ref{prop-coeff-Lalpha} we obtain the following.
\begin{cor}\label{cor-Lalpha-stable}
	Means $L_\alpha$ are stable iff $\alpha\in\{\pm\frac12,\pm1\}$.
\end{cor}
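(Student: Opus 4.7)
The plan is to exploit the fact, immediate from \eqref{exp-Mtt-stable}, that for any stable symmetric homogeneous mean $M$ with an asymptotic expansion of the form $M(x-t,x+t)\sim x + a_1 t^2 x^{-1} + a_2 t^4 x^{-3} + \dots$, the first two nontrivial coefficients are rigidly linked by
\[
a_2 = \tfrac{1}{6}\,a_1(1+a_1)(1-4a_1).
\]
Hence stability of $L_\alpha$ forces this identity on the coefficients read off from \eqref{exp-Mu-cosh}; solving it will give the necessary values of $\alpha$, and a direct appeal to Proposition \ref{pLS}\,(\ref{prop-pLS-a}) together with the known stability of power means will supply sufficiency.

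For the only-if direction, from \eqref{exp-Mu-cosh} I would record $a_1 = -\tfrac{1}{3}(2\alpha^2+1)$ and $a_2 = -\tfrac{2}{45}(1-\alpha^2)(7\alpha^2+2)$. Substituting this $a_1$ into $\tfrac{1}{6}a_1(1+a_1)(1-4a_1)$ and exploiting the clean factorizations $1+a_1 = \tfrac{2}{3}(1-\alpha^2)$ and $1-4a_1 = \tfrac{1}{3}(8\alpha^2+7)$ collapses the right-hand side to $-\tfrac{1}{81}(2\alpha^2+1)(1-\alpha^2)(8\alpha^2+7)$. Equating with $a_2$ and cancelling the shared factor $(1-\alpha^2)$ yields either $\alpha^2=1$ or, after clearing denominators, the quadratic $80u^2 - 16u - 1 = 0$ in $u=\alpha^2$. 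Its discriminant is the perfect square $576$, so the admissible root is $u = 1/4$ (the other, $-1/20$, is discarded), giving $\alpha = \pm 1/2$. Thus necessarily $\alpha\in\{\pm\tfrac12,\pm 1\}$.

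For the converse, I would simply recall that, by Proposition \ref{pLS}\,(\ref{prop-pLS-a}), $L_{1/2}=G=B_0$ and $L_1=H=B_{-1}$ are power means, which the introduction records to be stable for every real parameter (see \cite{Raiss-2011-stab}); the identity $L_{-\alpha}=L_\alpha$ observed just after \eqref{mean-def-Salpha} extends the conclusion to negative $\alpha$. The only potentially delicate point is the algebra of reducing the second-coefficient equation to a genuine quartic and confirming it has a single nonnegative root; however, the common factor $(1-\alpha^2)$ cancels transparently and the remaining quadratic in $\alpha^2$ has integer coefficients and a perfect-square discriminant, so no real obstacle is anticipated.
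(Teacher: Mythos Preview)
Your proof is correct and follows essentially the same approach as the paper: both compare the $x^{-1}$ and $x^{-3}$ coefficients from \eqref{exp-Mu-cosh} against the stable-mean constraint $a_2=\tfrac{1}{6}a_1(1+a_1)(1-4a_1)$ from \eqref{exp-Mtt-stable}, reduce (after cancelling the common factor $1-\alpha^2$) to the equation $18(7\alpha^2+2)=5(2\alpha^2+1)(8\alpha^2+7)$, and then identify $L_{\pm1/2}=G$, $L_{\pm1}=H$ for sufficiency. Your write-up makes the algebra more explicit (factoring and solving the resulting quadratic in $\alpha^2$), but the route is the same.
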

\begin{proof}
	By comparing coefficients \eqref{exp-Mu-cosh} in the asymptotic expansion of mean $L_\alpha$ with the stable mean coefficients \eqref{exp-Mtt-stable} by the powers
	$x^{-1}$ and $x^{-3}$
	we obtain the following equations
	\begin{align*}
		  -\tfrac13(2\alpha^2+1) &= a_1,\\
		  \tfrac2{45}(\alpha-1)(\alpha+1)(7\alpha^2+2) &=\tfrac16 a_1(1+a_1)(1-4a_1)\\
			& =\tfrac16 (-\tfrac13(2\alpha^2+1)) (1-\tfrac13(2\alpha^2+1))
				(1+\tfrac43(2\alpha^2+1)),
	\end{align*}
	which combined give the following
	$$
		 18(\alpha^2-1)(7\alpha^2+2)=5(2\alpha^2+1)(\alpha^2-1)(8\alpha^2+7).
	$$	
	The only real solutions are $\alpha\in\{\pm\frac12,\pm1\}$. For either of those values of $\alpha$ the stable mean is obtained. Namely,
	 $L_{\pm1}=H$ and $L_{\pm\frac12}=G$.
\end{proof}

The similar procedure as in Corollary \ref{cor-Lalpha-stable}, involving coefficients given in \eqref{exp-Mu-1cosh}, gives no solutions for $\lvert\alpha\rvert\le1$, so we have the following conclusion.
\begin{cor}
	Means $S_\alpha$, $\lvert\alpha\rvert\le1$, are not stable.
\end{cor}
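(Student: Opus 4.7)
The plan is to mirror the argument used in Corollary \ref{cor-Lalpha-stable}: compare the first two non-trivial coefficients in the asymptotic expansion \eqref{exp-Mu-1cosh} of $S_\alpha(x-t,x+t)$ with those forced by the stable mean template \eqref{exp-Mtt-stable}, and show that the resulting system has no admissible solution in $|\alpha|\le 1$.

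Concretely, if $S_\alpha$ were stable then matching the coefficient of $t^2 x^{-1}$ would force $a_1=\tfrac13(2\alpha^2-1)$, and then the formula \eqref{exp-Mtt-stable} would force the coefficient of $t^4 x^{-3}$ to equal $\tfrac16 a_1(1+a_1)(1-4a_1)$. Equating this with the value $-\tfrac{2}{45}(5\alpha^4-5\alpha^2+2)$ read off from \eqref{exp-Mu-1cosh}, substituting $a_1=\tfrac13(2\alpha^2-1)$ and setting $u=\alpha^2$, I expect the identity to reduce (after clearing denominators) to a single polynomial equation in $u$, which by routine expansion should come out to
$$
80\,u^3-120\,u^2+15\,u-1=0.
$$

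The real task is then to show this cubic has no root in $[0,1]$ (which corresponds to $|\alpha|\le 1$). The plan is to use elementary calculus: the derivative $240u^2-240u+15=15(16u^2-16u+1)$ has its two real roots $(2\pm\sqrt 3)/4$ inside $[0,1]$, giving a local maximum near $0.067$ and a local minimum near $0.933$. Evaluating the cubic at the boundary points gives $-1$ at $u=0$ and $-26$ at $u=1$, and a direct estimate at the local maximum $(2-\sqrt 3)/4$ yields a strictly negative value as well. Hence the cubic is strictly negative throughout $[0,1]$, so no $\alpha$ with $|\alpha|\le 1$ can satisfy the stability equations, and $S_\alpha$ fails to be stable.

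The only potential obstacle is the bookkeeping in deriving the cubic correctly from the two matching conditions; once that is done, the sign analysis on $[0,1]$ is immediate. As a sanity check, one may note that the identity $S_{1/2}=P$ and $S_1=T$ together with the known non-stability of the first and second Seiffert means recovers the conclusion at the two distinguished values of $\alpha$.
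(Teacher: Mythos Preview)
Your proposal is correct and follows exactly the approach the paper indicates: match the $t^2x^{-1}$ and $t^4x^{-3}$ coefficients of \eqref{exp-Mu-1cosh} against the stable mean template \eqref{exp-Mtt-stable} and show the resulting equation has no solution for $\lvert\alpha\rvert\le1$. The paper itself merely asserts that ``the similar procedure \dots\ gives no solutions'' without writing out the details; your derivation of the cubic $80u^3-120u^2+15u-1=0$ and the sign analysis on $[0,1]$ (local maximum value $\tfrac{15\sqrt3-27}{2}<0$ at $u=(2-\sqrt3)/4$) supply those details correctly.
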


Not all of the means \eqref{mean-def-Lalpha}, \eqref{mean-def-Salpha}, \eqref{mean-def-Mi} have asymptotic expansion of the form \eqref{def-MNK-M}. In order to obtain the asymptotic expansion of the resultant mean--map  for all of the means mentioned in the introduction, and afterwards find which of them are stable and study asymptotic expansion of $\cR(B_p,M,B_q)$,
we need to adjust the algorithm obtained in the paper \cite{Mih-2023-sssa}.

\begin{thm} \label{thm-asymexp-resultant-all}
	Assume that means $K$, $M$ and $N$ have expansions
	\begin{align}
		K(x-t,x+t) &\sim \sum_{n=0}^\infty a^K_n t^{n}x^{-n+1},\label{def-MNK-Kt}\\
		M(x-t,x+t) &\sim \sum_{n=0}^\infty a^M_n t^{n}x^{-n+1},\label{def-MNK-Mt}\\
		N(x-t,x+t) &\sim \sum_{n=0}^\infty a^N_n t^{n}x^{-n+1},\label{def-MNK-Nt}
	\end{align}
	as $x\to\infty$, such that $a_1^N \neq \pm1$.
	Then the coefficients in the asymptotic expansion of the resultant mean--map
	\be\label{exp-resultant-allpowers}
		R(x-t,x+t) = \cR(K,M,N)(x-t,x+t) \sim \sum_{n=0}^\infty a^R_n t^{n} x^{-n+1},
	\ee
	can be calulated by the recursive formula
	\be\label{thm-asymexp-resultant-all-amR}
		a_m^R = \frac14 \sum_{n=0}^m \sum_{k=0}^{m-n} a_n^K
				P[k,n,\bd] P[m-n-k,-n+1,\bs],\quad m\in\bN_0,
	\ee
	where	
	\begin{align*}
		d_{m-1}&= \sum_{n=0}^m a_n^M \sum_{k=0}^{m-n}
			 \bigl( P[k,n,\tilde{\bg}] P[m-n-k,-n+1,\tilde{\bh}] \\
			 	&\qquad\qquad\qquad-P[k,n,\bg] P[m-n-k,-n+1,\bh] \bigr),\quad m\in\bN, \\
		s_m&= \sum_{n=0}^m a_n^M \sum_{k=0}^{m-n}
			 \bigl( P[k,n,\tilde{\bg}] P[m-n-k,-n+1,\tilde{\bh}] \\
			 	&\qquad \qquad\qquad+P[k,n,\bg] P[m-n-k,-n+1,\bh]\bigr),\quad m\in\bN_0,
	\end{align*}
	and 	
	\be\label{thm-def-bgbh}
	\begin{aligned}
		\bg &= (1+a_1^N, a_2^N, a_3^N,\ldots),
		&\bh &= (2,a_1^N-1, a_2^N, a_3^N,\ldots), \\
		\tilde{\bg} &= (1-a_1^N, - a_2^N, - a_3^N,\ldots),
		&\tilde{\bh} &= (2,1+a_1^N, a_2^N, a_3^N,\ldots).
	\end{aligned}
	\ee
\end{thm}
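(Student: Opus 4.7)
The plan is to follow the blueprint of \cite{Mih-2023-sssa}, now accommodating asymptotic expansions containing all powers $t^n x^{-n+1}$ rather than only even powers. The key tool throughout will be Lemma \ref{lemma-power}, applied first inside each evaluation of $M$ and then once more for the outer evaluation of $K$. Symmetry of $M$ and $K$ will be exploited via the change of variables $(u,v) \mapsto \bigl(\tfrac{u+v}{2} \mp \tfrac{v-u}{2}\bigr)$, which converts an arbitrary value $M(u,v)$ into the one-variable form $M(y-w, y+w)$ to which \eqref{def-MNK-Mt} directly applies.

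First I would set $n := N(x-t, x+t)$ and use symmetry of $M$ to write
\begin{align*}
	M(x-t, n) &= M(y_1 - u_1,\, y_1 + u_1), & y_1 &= \tfrac{(x-t) + n}{2}, & u_1 &= \tfrac{n - (x-t)}{2},\\
	M(n, x+t) &= M(y_2 - u_2,\, y_2 + u_2), & y_2 &= \tfrac{n + (x+t)}{2}, & u_2 &= \tfrac{(x+t) - n}{2}.
\end{align*}
Substituting \eqref{def-MNK-Nt} into these expressions identifies
\begin{equation*}
	2u_1 = t\,\bg(t/x), \quad 2y_1 = x\,\bh(t/x), \quad 2u_2 = t\,\tilde\bg(t/x), \quad 2y_2 = x\,\tilde\bh(t/x),
\end{equation*}
with the four sequences exactly as in \eqref{thm-def-bgbh}. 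The hypothesis $a_1^N \neq \pm 1$ enters precisely here, ensuring that the leading coefficients of $\bg$ and $\tilde\bg$ are nonzero so that Lemma \ref{lemma-power} applies to their powers. Plugging into the one-variable expansion $M(y-u, y+u) \sim \sum_k a_k^M u^k y^{-k+1}$ and rearranging via Cauchy product should give
\begin{equation*}
	M(x-t, n) \sim \tfrac{1}{2}\sum_m T_m\, t^m x^{-m+1}, \qquad M(n, x+t) \sim \tfrac{1}{2}\sum_m \tilde T_m\, t^m x^{-m+1},
\end{equation*}
where $T_m$ and $\tilde T_m$ are the inner $(n,k)$-double sums appearing in the definitions of $s_m$ and $d_{m-1}$ in the theorem; the prefactor $\tfrac{1}{2}$ arises from the cancellation $2^{-k}\cdot 2^{k-1}$ between the factors of $2$ coming from $u_1 = (t/2)\bg(t/x)$ and $y_1 = (x/2)\bh(t/x)$.

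For the outer step I would use symmetry of $K$ to write $\cR(K,M,N)(x-t, x+t) = K(Y-U, Y+U)$ with $Y := \tfrac{1}{2}(M(x-t,n)+M(n,x+t))$ and $U := \tfrac{1}{2}(M(n,x+t)-M(x-t,n))$. The previous step yields
\begin{equation*}
	4Y \sim x\,\bs(t/x), \qquad 4U \sim t\,\bd(t/x);
\end{equation*}
the shift $j = m-1$ in the index of $\bd$ is forced by the observation $T_0 = \tilde T_0 = 2$, so the $m = 0$ contribution to $\tilde T_m - T_m$ automatically vanishes. Substituting into $K(Y-U, Y+U) \sim \sum_n a_n^K U^n Y^{-n+1}$ and invoking Lemma \ref{lemma-power} once more on $\bs^{-n+1}$ and $\bd^n$, the powers of $4$ should combine as $4^{-n}\cdot 4^{n-1} = 4^{-1}$ to produce the factor $\tfrac{1}{4}$ in \eqref{thm-asymexp-resultant-all-amR}, and reading off the coefficient of $t^m x^{-m+1}$ via a final Cauchy product will yield the claimed formula.

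The main obstacle will be bookkeeping. Nested applications of Lemma \ref{lemma-power} in the inner step, followed by two more for the outer step, generate triple Cauchy products whose index ranges must be tracked without error, and the factors of $\tfrac{1}{2}$ and $\tfrac{1}{4}$ introduced by the symmetric substitutions must be collected consistently. No convergence question is expected to arise: all manipulations take place at the level of formal asymptotic series, and asymptotic validity at each stage is inherited from \eqref{def-MNK-Kt}--\eqref{def-MNK-Nt} because $y_1, y_2 \to \infty$ with $u_1/y_1, u_2/y_2 \to 0$ as $x \to \infty$.
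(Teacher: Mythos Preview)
Your proposal is correct and follows essentially the same route as the paper's proof: both perform the symmetric change of variables $(u,v)\mapsto\bigl(\tfrac{u+v}{2}\mp\tfrac{v-u}{2}\bigr)$ to rewrite $M(x-t,N)$ and $M(N,x+t)$ in one-variable form, identify the resulting series with the sequences $\bg,\bh,\tilde\bg,\tilde\bh$, expand via Lemma \ref{lemma-power} and a Cauchy product to obtain the inner coefficients, and then repeat the procedure for the outer application of $K$ using the half-sum and half-difference (your $Y,U$; the paper's $X,T$) to arrive at $\bs,\bd$ and the factor $\tfrac14$. Your remarks on where the hypothesis $a_1^N\neq\pm1$ enters and on the vanishing of the $m=0$ term in $\tilde T_m-T_m$ match the paper's implicit reasoning exactly.
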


\begin{proof}
The proof goes by the similar procedure as the proof of the somewhat specific analogue form paper \cite{Mih-2023-sssa}. With $N$ being the abbreviated version of $N(x-t,x+t)$, the following holds:
	\begin{align*}
		&M(x-t,  N(x-t,x+t))
			 \sim \sum_{n=0}^\infty a_n^M \bigl(\tfrac12 (N-x+t)\bigr)^n
			 	\bigl(\tfrac12(N+x-t)\bigr)^{-n+1} \\
			& \sim \frac12 \sum_{n=0}^\infty a_n^M
				\Bigl(t \bigl(1+a_1^N\bigr)+\sum_{k=2}^\infty a_k^N t^k x^{-k+1}\Bigr)^n
				\Bigl(2x+\bigl(a_1^N-1\bigr)t+\sum_{j=2}^\infty a_j^N t^j x^{-j+1}\Bigr)^{-n+1}\\
			& \sim \frac12 \sum_{n=0}^\infty a_n^M
				 t^n \sum_{k=0}^\infty P[k,n,\bg] t^k x^{-k}
				 \cdot x^{-n+1} \sum_{j=0}^\infty P[j,-n+1,\bh] t^j x^{-j}\\
			&\sim \frac12 \sum_{m=0}^\infty \sum_{n=0}^m a_n^M
				\sum_{k=0}^{m-n} P[k,n,\bg] P[m-n-k,-n+1,\bh]
				t^m x^{-m+1}.
	\end{align*}
Similarly, the second component to be composed with the mean $K$ has the following expansion:
	\begin{align*}
		&M(N(x-t,x+t),x+t)
			 \sim \sum_{n=0}^\infty a_n^M \bigl(\tfrac12 (x+t-N)\bigr)^n
			 	\bigl(\tfrac12(x+t+N)\bigr)^{-n+1} \\
			& \sim \frac12 \sum_{n=0}^\infty a_n^M
				\Bigl(t \bigl(1-a_1^N\bigr)-\sum_{k=2}^\infty a_k^N t^k x^{-k+1}\Bigr)^n
				\Bigl(2x+\bigl(a_1^N+1\bigr)t+\sum_{j=2}^\infty a_j^N t^j x^{-j+1}\Bigr)^{-n+1}\\
			& \sim \frac12 \sum_{n=0}^\infty a_n^M
				 t^n \sum_{k=0}^\infty P[k,n,\tilde{\bg}] t^k x^{-k}
				 \cdot x^{-n+1} \sum_{j=0}^\infty P[j,-n+1,\tilde{\bh}] t^j x^{-j}\\
			&\sim \frac12 \sum_{m=0}^\infty \sum_{n=0}^m a_n^M
				\sum_{k=0}^{m-n} P[k,n,\tilde{\bg}] P[m-n-k,-n+1,\tilde{\bh}]
				t^m x^{-m+1}.
	\end{align*}
In order to obtain the desired formula, we need to calculate one half of the difference of the previous two expressions
	\bes
		T 	 = \frac12\bigl( M(N(x-t,x+t),x+t) - M(x-t, N(x-t,x+t)) \bigr) 
			  \sim \frac14  \sum_{m=1}^\infty d_{m-1} t^m x^{-m+1},  		 	
	\ees 
and one half of the sum of those two expressions
	\bes 
		X 	 = \frac12\bigl( M(N(x-t,x+t),x+t) + M(x-t,  N(x-t,x+t)) \bigr)\\
			  \sim \frac14  \sum_{m=0}^\infty s_m t^m x^{-m+1}. 		 	
	\ees 
The resultant mean-map can then be written in a following way	
	\begin{align*}
		R 	&= K(X-T,X+T) \\
			& \sim \sum_{n=0}^\infty a_n^K
				\Bigl( \tfrac14 \sum_{k=1}^\infty d_{k-1} t^k x^{-k+1} \Bigr)^n
				\Bigl( \tfrac14 \sum_{j=0}^\infty s_j t^j x^{-j+1} \Bigr)^{-n+1} \\
			& \sim \frac14 \sum_{n=0}^\infty a_n^K
				 t^n \sum_{k=0}^\infty P[k,n,\bd] t^k x^{-k}
				 \cdot x^{-n+1} \sum_{j=0}^\infty P[j,-n+1,\bs] t^j x^{-j} \\
			& \sim \frac14 \sum_{m=0}^\infty \sum_{n=0}^m \sum_{k=0}^{m-n} a_n^K
				P[k,n,\bd] P[m-n-k,-n+1,\bs] t^m x^{-m+1}\\
			&\sim \sum_{m=0}^\infty a_m^R t^m x^{-m+1}.
	\end{align*}
	
\end{proof}

With assumptions from Theorem \ref{thm-asymexp-resultant-all},
the first few coefficients in the expansion \eqref{exp-resultant-allpowers} of the resultant mean are given by:
 	\be\label{coeff-aR-all}
 	\begin{aligned}
 		a_0^R &= 1,\\
 		a_1^R &= \tfrac12 \bigl( a^K_1+a^M_1+a^N_1-a^K_1 a^M_1 a^N_1 \bigr),\\
 		a_2^R &= \tfrac14 \bigl( a^N_2  (2-2 a^K_1  a^M_1 )+a^K_2  (-1+a^M_1  a^N_1 )^2+a^M_2 + a^M_2 a^N_1 (a^N_1 -2 a^K_1)   \bigr),\\
 		a_3^R &= \tfrac1{64} \bigl( -32  a^N_3
 			( a^K_1   a^M_1 -1 ) -8  a^K_3  ( a^M_1 a^N_1 -1)^3 \\
 			&\qquad - 8  a^K_2  (-1+ a^M_1   a^N_1 ) ( (a^N_1)^2  a^M_1 -a^M_1(4a^N_2+1)   + a^N_1  \bigl((a^M_1)^2-4  a^M_2 -1)\bigr)\\
 			&\qquad +8 a^M_2 ( a^K_1 - a^N_1 ) \bigl((a^N_1)^2-4  a^N_2 -1\bigr)
 			-8a^M_3 (a^K_1 a^N_1(3+ (a^N_1)^2)-3  (a^N_1)^2 -1)    \bigr).
 	\end{aligned}
 	\ee
 	
\begin{rem}\label{rem-Cases}
	In the previous Theorem formula for the Case I: $a_1^N \neq \pm1$ was presented. In other cases, first few terms in the sequence $\bg$ or $\tilde{\bg}$ is equal to 0 and therefore the expression $P[\cdot,\cdot,\bg]$ or $P[\cdot,\cdot,\tilde{\bg}]$ is not well defined. In order to complement the result of Theorem \ref{thm-asymexp-resultant-all}, let $z$ be the minimal integer, greater or equal to 2, such that $a^N_z\neq 0$. In Case II: $a^N_1=-1$, let us redefine $\bg=(a^N_z,a^N_{z+1},\ldots)$ and let $\bh,\tilde{\bg},\tilde{\bh}$ be the same as in \eqref{thm-def-bgbh}. Then
	\begin{align*}
		d_{m-1}&= \sum_{n=0}^m a_n^M \sum_{k=0}^{m-n}
			 	P[k,n,\tilde{\bg}] P[m-n-k,-n+1,\tilde{\bh}] \\
			 	&\qquad\qquad
			 		-\sum_{n=0}^{\lfloor \frac{m}z \rfloor} a_n^M \sum_{k=0}^{m-nz}
			 		P[k,n,\bg] P[m-nz-k,-n+1,\bh], \quad m\in\bN, \\
		s_m&= \sum_{n=0}^m a_n^M \sum_{k=0}^{m-n}
			 	P[k,n,\tilde{\bg}] P[m-n-k,-n+1,\tilde{\bh}] \\
			 	&\qquad\qquad
			 		+\sum_{n=0}^{\lfloor \frac{m}z \rfloor} a_n^M \sum_{k=0}^{m-nz}
			 		P[k,n,\bg] P[m-nz-k,-n+1,\bh], \quad m\in\bN_0,
	\end{align*}
	and the coefficients of the resultant mean-map can be calculated by the formula \eqref{thm-asymexp-resultant-all-amR}.
	In Case III: $a^N_1=1$, the sequence $\tilde{\bg}$ needs to be redefined in a similar manner and the rest of the procedure goes analogously.
\end{rem}

\begin{rem}
	Once calculated by the procedure given in Theorem \ref{thm-asymexp-resultant-all}, the coefficients behave well for the special values mentioned in Cases II and III from Remark \ref{rem-Cases} where the existence has been shown.
	As it was proved in \cite[Lemma 2.2]{ElVu-2014-04} we may use coefficients from the list  \eqref{coeff-aR-all} with $a_1^N = \pm1$ and $a_2^N=\ldots=a_{z-1}^N=0$ to obtain the coefficients in Cases II and III as well.
\end{rem}

\begin{rem}\label{rem-stable}
Regarding the stability examination, when equating the coefficients $a^R_m$ from \eqref{coeff-aR-all} with $a^M_m=a^N_m=a^K_m$, for $m=1$ we obtain that $a_1^M(1+a_1^M)(1-a_1^M)=0$. The first possibility ($a_1^M=0$) implies (inductively) that also $a_{2m+1}=0$, for all $m\in\bN$, which means that $M$ has the asymptotic expansion of the type \eqref{def-MNK-M}. Each of other two possibilities ($a_1^M = \pm1$) implies that $a_{m}=0$, for all $m\ge2$, which correspond to the first and the second projection.
\end{rem}

	As a consequence of the coefficients comparison, using the stable mean expansion \eqref{exp-Mtt-stable}, for means whose asymptotic expansion does not contain even powers of $x$, on the one side and the corresponding expansions given in Proposition \ref{prop-exp-Mi} on the other side, and employing Remark \ref{rem-stable} for those means whose asymptotic expansions contain all powers $x^{-n+1}$, $n\in\bN_0$, we have the following conclusion.
\begin{cor}
None of the means form the list \eqref{mean-def-Mi} is stable.
\end{cor}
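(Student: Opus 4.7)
The plan is to invoke the necessary conditions for stability extracted in Remark \ref{rem-stable}, combined with the explicit asymptotic coefficients collected in Proposition \ref{prop-exp-Mi}. Recall that stability forces the leading coefficient $a_1^M$ (of $t\,x^{0}$ in the expansion \eqref{def-MNK-Mt}) to satisfy $a_1^M(1-a_1^M)(1+a_1^M)=0$; moreover $a_1^M=\pm1$ forces $M$ to be a projection (all higher $a_m^M$ vanish), while $a_1^M=0$ reduces the expansion to the even-power form \eqref{def-MNK-M} and subjects it to the stable-mean formula \eqref{exp-Mtt-stable}. I would organise the proof by partitioning the list according to whether the expansion carries odd powers of $t$.

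For the first regime --- means whose expansions contain odd powers of $t$, namely $M_1$, $M_3$, $M_5$ and $M_{\alpha,r}$ with $\alpha\neq 0$ --- I read off $a_1^M$ directly. For $M_5$ the value $a_1^M=1/2$ is ruled out at once, and similarly for $M_{\alpha,r}$ with $\alpha\notin\{-1,0,1\}$. For $M_1$ and $M_3$ one has $a_1^M=1$, but the nonzero coefficient of $t^2x^{-1}$ (namely $-2/3$ for $M_1$) or of $\lvert t\rvert^3 x^{-2}$ (namely $-1/3$ for $M_3$) destroys the projection requirement. For $M_{\pm 1,r}$ the coefficient of $t^2x^{-1}$ equals $\pm\tfrac{2r}{3}\neq 0$, again contradicting the projection requirement.

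For the second regime --- $M_2$, $M_4$, and $M_{0,r}=L$ (by Proposition \ref{prop-Malphar-specialcases}), whose expansions involve only even powers of $t$ --- I would reindex into the form \eqref{def-MNK-M} and compare with \eqref{exp-Mtt-stable} at the $t^4 x^{-3}$ level, exactly as in the proof of Corollary \ref{cor-Lalpha-stable}. For $M_2$ the leading coefficient $a_1=1/3$ predicts $-2/81$, whereas Proposition \ref{prop-exp-Mi} gives $-2/9$; for $M_4$ the vanishing $a_1=0$ predicts a zero $t^4 x^{-3}$ term, contradicted by $-1/6$; and for $L$ the value $a_1=-1/3$ predicts $-7/81$, contradicted by $-4/45$. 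In each case a single coefficient mismatch suffices.

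The main obstacle is the case organisation for $M_{\alpha,r}$: the two distinguished values $\alpha=\pm1$ must be treated by the projection argument, the value $\alpha=0$ must be converted to $L$ via Proposition \ref{prop-Malphar-specialcases} and handled in the second regime, while all other admissible $\alpha$ are dispatched immediately by the algebraic constraint on $a_1^M$. Once this split is in place, every remaining step is a mechanical coefficient comparison with no further analytic difficulty.
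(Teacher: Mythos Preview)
Your proposal is correct and follows essentially the same route as the paper: split the list into means whose expansion has only even powers of $t$ (handled by comparison with \eqref{exp-Mtt-stable}) and means whose expansion contains odd powers of $t$ (handled via Remark \ref{rem-stable}), exactly as the paragraph preceding the corollary outlines. Your write-up is in fact more detailed than the paper's, which leaves the individual coefficient checks and the case split for $M_{\alpha,r}$ at $\alpha\in\{-1,0,1\}$ implicit; the computations you give (e.g.\ the predicted $t^4x^{-3}$ coefficients $-2/81$, $0$, $-7/81$ for $M_2$, $M_4$, $L$ versus the actual $-2/9$, $-1/6$, $-4/45$) are all correct.
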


We finish this Section with a result based on Theorem \ref{exp-resultant-allpowers} which
 will be used in sequel.
 With $K=B_p$ and $N=B_q$, by incorporating coefficients \eqref{asymexp-powermean} into the expansions \eqref{def-MNK-Kt} and \eqref{def-MNK-Nt} with parameters $p$ and $q$ respectively, we obtain
coefficients in the asymptotic expansion \eqref{exp-resultant-allpowers} of the resultant mean--map:
 	\be\label{coeff-RBpNBq-all}
 	\begin{aligned}
 	 \cR(B_p,M,B_q) &(x-t,x+t)  = x+\tfrac12 a^M_1 t
 	 	+\tfrac18 \bigl(2a^M_2 +p+2q-3 \bigr) t^2 x^{-1}\\
 	 	& \qquad \qquad +\tfrac1{16} \bigl(2 a^M_3 -(p-1)(2q-1) a^M_1\bigr) t^3 x^{-2}
 	 	+\O(x^{-3}).
 	\end{aligned}
 	\ee

\section{Sub-stabilizability and super-stabilizability with power means }
In this Section we present some of the possible applications of the previously obtained results. We will show the use of asymptotic expansions when examining the possibility for a given mean to be $(B_p,B_q)$-sub- or super-stabilizable and when possible, how to find optimal parameters $p$ and $q$.

Since the asymptotic inequality is the necessary condition for the proper inequality, we analyse when
\be\label{sub-super-stab-power-necessary}
	M-\cR(B_p,M,B_q) \succ0.
\ee
The best approximation is obtained when as many as possible first coefficients are equal to 0. See \cite{Vu} for detailed analysis. Because of the reasoning in Remark \ref{rem-tt-st}, it is sufficient to use the expansions in variables $(x-t,x+t)$. 

Asymptotic inequality corresponds the proper inequality for means when variables $s$ and $t$ are close enough to each other. In order to complement the information obtained form asymptotic side, it is often useful to observe relation between means about the point $(0,1)$. For more details we refer to paper \cite{El-2015}.


\begin{exa}\label{exa-Lalpha}
	Let $M=L_\alpha$. Observe the difference between \eqref{exp-Mu-cosh} and \eqref{coeff-RBpNBq}.
When equating the coefficient by $x^{-1}$ with 0, we have that $q=\frac12(1-p)-2\alpha^2$ and then
	\bes
		 (L_\alpha-\cR(B_p,L_\alpha,B_q))(x-t,x+t) \sim
		 	-\tfrac1{384}(4\alpha^2-1)(p^2-1-16\alpha^2+16\alpha^4)t^4 x^{-3}+ \dots
	\ees
	and if also $p=\pm\sqrt{1+16\alpha^2-16\alpha^4}$ then
	\bes
		(L_\alpha-\cR(B_p,L_\alpha,B_q))(x-t,x+t) \sim 
		-\tfrac1{720}\alpha^2(\alpha^2-1)(4\alpha^2-1)^3 t^6 x^{-5} +\dots
	\ees
Hence, for such $p$ and $q$, \eqref{sub-super-stab-power-necessary} holds for $\frac12<\abs{\alpha}<1$, and we have the opposite inequality sign in \eqref{sub-super-stab-power-necessary} for $0<\abs{\alpha} <\frac12$.

Additionally, when we equate coefficient by $x^{-5}$ with 0, we obtain the following trivial (and already known) cases: $\alpha=\pm\frac12,q=-p$; $\alpha=0,p=-1,q=1$; $\alpha=0,p=1,q=0$; $\alpha=\pm1, p=1,q=-2$; where the stabilizability is achieved.

On the other side, values of limits $\lim_{s\to0} L_\alpha (s,1-s)=0$ and $\lim_{s\to0} \cR(B_p,L_\alpha,B_q) (s,1-s) >0$ for any $p$ and $q$, imply that $L_\alpha$ could only be $(B_p,B_q)$-super-stabilizable.  

Combining the observations about the sign of the difference $L_\alpha-\cR(B_p,L_\alpha,B_q)$ in two limiting points, with the intent to achieve the best possible order of inequality, 
we see that for $\frac12<\abs{\alpha}<1$ and above mentioned specific values of $p$ and $q$ mean $L_\alpha$ cannot be either sub- or super-stabilizable with the pair of power means $(B_p,B_q)$. 
\end{exa}

Motivated by the Example \ref{exa-Lalpha}, numerical experiments indicate that the following statement should be true.

\begin{conj}
	Let $q=\frac12(1-p)-2\alpha^2$ and $p=\pm\sqrt{1+16\alpha^2-16\alpha^4}$.
	Then $L_\alpha-\cR(B_p,L_\alpha,B_q)<\,0$ for $\lvert\alpha\rvert <\,\frac12$.
\end{conj}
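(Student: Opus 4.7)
\emph{Plan.} By homogeneity and symmetry it suffices, via \eqref{def-fM} and the equivalence \eqref{mf}, to prove the one-variable inequality $f_{L_\alpha}(x)<f_{\cR(B_p,L_\alpha,B_q)}(x)$ for every $x>0$, under the hypotheses $|\alpha|<1/2$, $p=\pm\sqrt{1+16\alpha^2-16\alpha^4}$, $q=\tfrac12(1-p)-2\alpha^2$. Here $f_{L_\alpha}$ is explicit from \eqref{LS}, while $f_{\cR(B_p,L_\alpha,B_q)}$ is the composition $B_p\bigl(L_\alpha(e^{-x},B_q(e^{-x},e^x)),L_\alpha(B_q(e^{-x},e^x),e^x)\bigr)$ divided by $G=1$, which can be written in closed form as a rational combination of $\sinh$, $\cosh$ and fractional powers.

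The first step is to record both boundary regimes. From Example \ref{exa-Lalpha}, the difference $\Delta_\alpha(x):=f_{\cR(B_p,L_\alpha,B_q)}(x)-f_{L_\alpha}(x)$ has a Taylor expansion at $x=\infty$ (translated from the expansion in $(x-t,x+t)$ via Remark \ref{rem-tt-st}) whose first nonzero coefficient is proportional to $-\alpha^2(\alpha^2-1)(4\alpha^2-1)^3$; for $|\alpha|<1/2$ this quantity has the sign $(-)\cdot(-)\cdot(-)=(-)$, so the overall coefficient is strictly positive. At the opposite limit, the values $\lim_{s\to0}L_\alpha(s,1-s)=0$ and $\lim_{s\to0}\cR(B_p,L_\alpha,B_q)(s,1-s)>0$ give $\Delta_\alpha>0$ near the endpoint $s=0$. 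Thus both asymptotic regimes point the right way, and the content of the conjecture is the interpolation between them.

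The second step is to interpolate globally. The natural attempt is to prove the sign-definiteness of $\Delta_\alpha$ on $(0,\infty)$ by factoring it as $\Delta_\alpha(x)=\varphi_\alpha(x)/\psi_\alpha(x)$ with $\varphi_\alpha(0)=\psi_\alpha(0)=0$ and applying an $L'$Hospital-type monotonicity rule to reduce matters to comparing $\varphi_\alpha'/\psi_\alpha'$ at the two endpoints; equivalently, one would try to represent $\Delta_\alpha$ as an integral of a manifestly positive kernel over $[0,x]$. A second route is a parameter-homotopy in $\alpha$: at the boundary cases $\alpha^2=0$ and $\alpha^2=1/4$ the sixth-order term vanishes together with all lower terms and one can check stabilizability directly (indeed, these are among the trivial cases listed in Example \ref{exa-Lalpha}); one would then show that for $0<\alpha^2<1/4$ the difference $\Delta_\alpha(x)$ never touches zero as $\alpha^2$ moves off these endpoints, using the implicit function theorem and the explicit sign of the leading coefficient.

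\emph{Main obstacle.} The central difficulty is that $p=\pm\sqrt{1+16\alpha^2-16\alpha^4}$ is irrational in $\alpha$, so no algebraic identity for power means collapses $\cR(B_p,L_\alpha,B_q)$ to a simpler form; in particular the nice reduction of Lemma \ref{lemma-AG} is unavailable. Consequently, neither of the strategies above can be executed by term-by-term algebra: one faces a transcendental one-parameter family of inequalities, and the sign-definite kernel (or monotonic ratio $\varphi_\alpha'/\psi_\alpha'$) must be exhibited in a form robust to the irrational exponent. This is precisely why the statement is conjectured rather than proved, and any rigorous argument will likely require either a sharper representation of the resultant mean-map specific to $K=B_p$, $N=B_q$, or a delicate two-variable differential inequality combined with the asymptotic and boundary information recorded above.
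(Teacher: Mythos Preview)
This statement is a \emph{conjecture} in the paper; no proof is offered there, only the motivating asymptotic and boundary evidence from Example~\ref{exa-Lalpha}. You correctly recognize this and your write-up is essentially a thoughtful discussion of why a proof is not immediate, not a proof itself. That is an honest and appropriate assessment.

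Two minor points of sloppiness are worth cleaning up. First, your sign bookkeeping is tangled: you define $\Delta_\alpha=\cR-L_\alpha$, but then quote the leading coefficient as $-\alpha^2(\alpha^2-1)(4\alpha^2-1)^3$, which is the sign convention for $L_\alpha-\cR$ from Example~\ref{exa-Lalpha}; after computing its sign as $(-)$ you nonetheless declare the overall coefficient ``strictly positive''. The conclusion is right --- for $0<|\alpha|<1/2$ one has $\alpha^2>0$, $\alpha^2-1<0$, $(4\alpha^2-1)^3<0$, so $\tfrac{1}{720}\alpha^2(\alpha^2-1)(4\alpha^2-1)^3>0$ and hence $\Delta_\alpha\succ0$ --- but the intermediate reasoning should be rewritten so it actually tracks. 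Second, the asymptotic expansion in $(x-t,x+t)$ as $x\to\infty$ corresponds, under the substitution \eqref{def-fM}, to the regime where the $f_M$-variable tends to $0$ (arguments close), not to $\infty$; your phrase ``Taylor expansion at $x=\infty$'' conflates the two different variables both called $x$.

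As for the proposed strategies (L'Hospital-type monotonicity, integral representation with a positive kernel, homotopy in $\alpha$ from the stabilizable endpoints $\alpha^2\in\{0,1/4\}$): these are reasonable avenues and you have correctly identified the central obstruction, namely that the irrational exponent $p=\pm\sqrt{1+16\alpha^2-16\alpha^4}$ prevents any algebraic simplification of $\cR(B_p,L_\alpha,B_q)$ in the spirit of Lemma~\ref{lemma-AG}. The paper makes no further progress on this point either, so there is nothing to compare your outline against.
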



\begin{exa}
Let $M=S_\alpha$. Observe the difference between \eqref{exp-Mu-1cosh} and \eqref{coeff-RBpNBq}. 
The best order of approximation as $x\to\infty$ is obtained when $q=\frac12(1-p)+2\alpha^2$
and $p^2=\frac{1-12\alpha^2+112\alpha^4-64\alpha^6}{1+4\alpha^2}$. For such values $p$ and $q$ we have
	\bes
		(S_\alpha-\cR(B_p,S_\alpha,B_q))(x-t,x+t) \sim
			\tfrac1{720} \alpha^2(1+\alpha^2)(1-16\alpha^2+16\alpha^4)^2(1+4\alpha^2)^{-1} t^6 x^{-5}+\dots
	\ees
Then, \eqref{sub-super-stab-power-necessary} holds when $\abs\alpha \neq \frac12\sqrt{2\pm\sqrt3}$ and $\alpha\neq0$. Examining the sign of the coefficient by $x^{-7}$ for the corresponding values of $\alpha$ we may see that the opposite sign in \eqref{sub-super-stab-power-necessary} holds for $\abs\alpha=\frac12\sqrt{2\pm\sqrt3}$. When additionally equating the coefficient by $x^{-5}$ with 0 we obtain trivial cases: $\alpha=0,p=1,q=0$ and $\alpha=0,p=-1,q=1$, where the stabilizability is achieved.
\end{exa}

\begin{conj}
	Let $q=\frac12(1-p)+2\alpha^2$ and $p^2=\frac{1-12\alpha^2+112\alpha^4-64\alpha^6}{1+4\alpha^2}$.
	Then $S_\alpha-\cR(B_p,S_\alpha,B_q)<0$ for $\abs{\alpha}=\frac12\sqrt{2\pm\sqrt3}$.
\end{conj}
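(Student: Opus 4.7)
By the characterization \eqref{mf}, the conjectured pointwise inequality is equivalent to the scalar inequality $f_{S_\alpha}(x) < f_{\cR(B_p,S_\alpha,B_q)}(x)$ for all $x > 0$. A preliminary simplification helps considerably: for $\alpha^2 = \tfrac14(2\pm\sqrt3)$ the constraints $q = \tfrac12(1-p) + 2\alpha^2$ and $p^2 = \tfrac{1-12\alpha^2+112\alpha^4-64\alpha^6}{1+4\alpha^2}$ collapse to $p = \pm(\sqrt3\pm1)$ with $q = \tfrac12(3\pm\sqrt3 - p)$, and in two of the four resulting subcases one has $q = 1$, so that the inner stable mean is just the arithmetic mean $A$. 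Those two subcases, where the nested structure simplifies considerably, should be handled first.

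Next I would derive an explicit one-variable form for the resultant mean. Writing $a = e^{-x}G$, $b = e^x G$, $c = B_q(a,b) = G\, f_{B_q}(x)$, and setting $y_\pm(x) = \tfrac12\bigl(x\pm\ln f_{B_q}(x)\bigr)$, homogeneity of $S_\alpha$ and $B_p$ gives
\[
    f_{\cR(B_p,S_\alpha,B_q)}(x) = f_{B_q}(x)^{1/2}\cdot B_p\!\left(e^{-x/2}\, f_{S_\alpha}(y_+(x)),\; e^{x/2}\, f_{S_\alpha}(y_-(x))\right).
\]
Set $g(x) \coloneqq f_{\cR(B_p,S_\alpha,B_q)}(x) - f_{S_\alpha}(x)$. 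The goal is then $g(x) > 0$ for every $x > 0$.

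The analysis then proceeds on two fronts, mirroring the pattern of Example~\ref{exa-Lalpha}. At $x \to 0^+$, the Taylor coefficients of $g$ correspond (by Remark~\ref{rem-tt-st}) to the coefficients of the expansion in variables $(x-t, x+t)$ as $x\to\infty$, so for the specified $\alpha$ the $x^{-1}$, $x^{-3}$, $x^{-5}$ coefficients of $S_\alpha - \cR$ all vanish while the $x^{-7}$ coefficient is strictly negative, which translates to $g > 0$ on a neighborhood of $0$. At the opposite end $x \to \infty$, the limits $\lim_{s \to 0} S_\alpha(s, 1-s)$ and $\lim_{s \to 0}\cR(B_p, S_\alpha, B_q)(s, 1-s)$ can be computed from the closed-form definition \eqref{mean-def-Salpha} of $S_\alpha$ and the power-mean formula for $B_p,B_q$, and comparing these pins down the sign of $\lim_{x\to\infty} g(x)$.

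The main obstacle, and in fact the substance of the conjecture, is global control of $g$ on the intermediate range: positivity at both ends does not preclude $g$ from dipping through zero in the middle. Three lines of attack seem natural: (a) differentiating and showing that $g'$ has at most one zero, reducing the problem to a sign check at a unique critical point; (b) passing to a normalization such as $g(x)/(\sinh x)^k$ for an appropriate $k$, which often converts polynomial-in-$\sinh$ expressions into monotone functions; or (c) exploiting the algebraic cleanness of $p = \pm(\sqrt3\pm1)$ and $\alpha^2 = \tfrac14(2\pm\sqrt3)$ to rewrite $g$ as a sum of manifestly nonnegative terms via a dedicated identity. I expect (c) to be the most promising because the numerology is so specific, but each route involves substantial hyperbolic-trigonometric calculation, and it is precisely this global middle-range control that I expect to be the principal technical difficulty and the reason the statement is left here as a conjecture.
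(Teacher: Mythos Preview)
The statement you are addressing is labeled as a \emph{conjecture} in the paper, and the paper provides no proof for it; it is preceded only by the asymptotic computation in the example on $S_\alpha$, which establishes the sign of the $x^{-7}$ coefficient (and hence the inequality for arguments close to the diagonal) but nothing more. So there is no ``paper's own proof'' against which to compare your proposal.

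Your outline is reasonable and your preliminary reductions are correct: for $\alpha^2=\tfrac14(2\pm\sqrt3)$ the constraint on $p^2$ indeed gives $p=\pm(\sqrt3\pm1)$, and in two of the four branches one has $q=1$. Your one-variable expression for $f_{\cR(B_p,S_\alpha,B_q)}$ is also right. The endpoint analysis you describe matches exactly what the paper already supplies (near $x=0$) or what follows from the closed form of $S_\alpha$ (as $x\to\infty$).

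But, as you yourself point out, the substantive content of the conjecture is precisely the global control on the intermediate range, and none of your three suggested routes (a)--(c) is carried out. So what you have written is a plan, not a proof; you correctly diagnose the gap, but you do not close it. This is consistent with the paper's own treatment: the authors, too, stop at the asymptotic evidence and leave the global inequality open.
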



\begin{exa}\label{exa-Sec4-M1}
Let $M=M_1$. Mean $M$ has the asymptotic expansion of the form \eqref{def-MNK-Mt}
and hence from \eqref{coeff-RBpNBq-all} follows that
\bes
	(M_1-\cR(B_p,M_1,B_q))(x-t,x+t)
		\sim  \tfrac12 a^M_1 t +\O(x^{-1}).
\ees
From Proposition \ref{prop-exp-Mi} we see that $a^M_1 = \frac{\lvert t \rvert}t$ 
so the expansion of the difference starts with coefficient $\tfrac12 a^M_1 t = \tfrac12 \abs{t}$ by $x^0$,
and the asymptotic inequality \eqref{sub-super-stab-power-necessary} holds,
which means that for $s$ and $t$ close enough and $s\neq t$, the difference
$M_1(s,t)-\cR(B_p,M_1,B_q)(s,t)$ is greater than 0. On the other side, observing the
	values of limits $\lim_{s\to0}M_1(s,1-s)=0$ and $\lim_{s\to0}\cR(B_p,M_1,B_q) (s,1-s)= 2^{-1/p}\cdot\lvert 2^{-1/q}-1 \rvert / \ln(1+\lvert\ln 2^{-1/q} \rvert ) >0$, $\forall p,q \in \bR$, implies that there are $s$ and $t$ for which the difference between $M_1$ and $\cR(B_p,M_1,B_q)$ is negative. We may conclude that $M_1$ cannot be either sub- or super-stabilizable with power means.
\end{exa}

\begin{exa}
Let $M=M_2$. Then from Proposition \ref{prop-exp-Mi} and \eqref{coeff-RBpNBq} we have
\begin{align*}
	(M_2-&\cR(B_p,M_2,B_q))(x-t,x+t)
		\sim  \tfrac18 (5-p-2q)t^2x^{-1} \\
		& + \tfrac{1}{384} \left(2 p^3-3 p^2+2 p (14 q-9)+4 q (4 (q-2) q-9)-45\right)t^4x^{-3}+\dots
\end{align*}
For $p=5-2q$ and $q=\frac12(5\pm\sqrt{17})$, coefficients by $x^{-1}$ and $x^{-3}$ become equal to 0, and coefficient by $x^{-5}$ is equal to $-\frac{11}{180} t^6$. Therefore,
\bes
	M_2 -\cR(B_p,M_2,B_q) \prec 0.
\ees
\end{exa}

\begin{conj}
	Let $p=5-2q$ and $q=\frac12(5\pm\sqrt{17})$.
	Then $M_2-\cR(B_p,M_2,B_q)<0$.
\end{conj}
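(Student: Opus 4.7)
The plan is to recast the claimed strict inequality as a one-variable problem via the $f_M$-formalism of \eqref{def-fM} and then combine the local asymptotic data from the preceding example with a global sign analysis. By homogeneity and symmetry it suffices to set $a=e^{-x}$, $b=e^x$ with $x>0$; writing $y=B_q(e^{-x},e^x)=(\cosh(qx))^{1/q}$, $\xi_1=\tfrac12(x+\ln y)$, $\xi_2=\tfrac12(x-\ln y)$, homogeneity gives $M_2(e^{-x},y)=\sqrt{e^{-x}y}\,f_{M_2}(\xi_1)$ and $M_2(y,e^x)=\sqrt{e^x y}\,f_{M_2}(\xi_2)$, so that the conjecture is equivalent to $g(x)<0$ for all $x>0$, where
\[
g(x):=f_{M_2}(x)-B_p\Bigl(\sqrt{e^{-x}y}\,f_{M_2}(\xi_1),\,\sqrt{e^x y}\,f_{M_2}(\xi_2)\Bigr),
\]
with $p,q$ fixed as in the statement.

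Next, I would exploit the preceding example: for these $p,q$ the first five coefficients in the expansion of $g$ at the origin vanish and the leading term is $-\tfrac{11}{180}x^6$, so by analyticity $g(x)<0$ on some interval $(0,\delta)$. At the other end of $(0,\infty)$, using $f_{M_2}(x)\sim \tfrac{\sqrt 2}{\pi}\,e^x$ together with the explicit exponential growth of $B_p$ and $B_q$ evaluated at $(e^{-x},e^x)$, a direct comparison of growth rates for the two admissible parameter pairs $(p,q)=\bigl(-\sqrt{17},\tfrac{5+\sqrt{17}}{2}\bigr)$ and $(p,q)=\bigl(\sqrt{17},\tfrac{5-\sqrt{17}}{2}\bigr)$ should pin down the sign of $g$ as $x\to\infty$; I expect $g$ to stay strictly negative at both ends of $(0,\infty)$.

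The main obstacle will be ruling out sign changes of $g$ in the intermediate range. One attempt is to set $h(x)=g(x)/x^6$, which is analytic and negative near $0$, and to try to show $h<0$ on $(0,\infty)$ by studying $h'$. A more structural route, modelled on the proofs in Section~2, is to clear the $\arctan(\sqrt 2\,\cdot)$ denominator in $f_{M_2}$ and reduce the inequality to the sign of a combination of $\sinh$, $\cosh$, $(\cosh(qx))^{1/q}$ and $\arctan$; repeated differentiation, combined with the knowledge that $g$ has a zero of order exactly six at the origin, would then allow an iterated monotonicity/L'Hospital argument to propagate the sign throughout $(0,\infty)$. The algebraic heaviness caused by nesting $(\cosh(qx))^{1/q}$ inside $f_{M_2}$ and $B_p$ is the genuine difficulty, and I expect that computer-algebra assistance will be essential. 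Should direct monotonicity fail on some compact subinterval, one can instead subdivide $(0,\infty)$ and combine a Taylor remainder estimate of sufficient order with rigorous numerical lower bounds on $|h|$ at the subdivision points in order to yield a verification.
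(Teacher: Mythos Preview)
The statement you are attacking is labelled a \emph{Conjecture} in the paper; the authors give no proof, only the asymptotic evidence from the preceding example (the leading term $-\tfrac{11}{180}\,t^6x^{-5}$). So there is nothing in the paper to compare your argument against.

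Your reduction to the single-variable function $g$ via \eqref{def-fM} is set up correctly, and your identification of the leading Taylor term $-\tfrac{11}{180}\,x^6$ at the origin is consistent with the paper's expansion (after the substitution $e^{\pm x}=\cosh x\pm\sinh x$). That local part is fine.

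What you have written, however, is a research outline rather than a proof, and you say so yourself. Two concrete gaps remain. First, the behaviour at infinity: you correctly note that $f_{M_2}(x)\sim\tfrac{\sqrt2}{\pi}e^{x}$ and that $\cR(B_p,M_2,B_q)(e^{-x},e^{x})\sim C\,e^{x}$ for an explicit constant $C$, but you never compute $C$ and check that $\tfrac{\sqrt2}{\pi}<C$ for each of the two parameter pairs. This is not a formality, since the outer mean has $p=-\sqrt{17}<0$ in one case and $p=\sqrt{17}>0$ in the other, and $B_p$ reacts very differently to the two inner arguments (one of which tends to a finite limit while the other blows up). Second, the intermediate range: you list several possible strategies (monotonicity after clearing denominators, iterated differentiation using the sixth-order zero at the origin, or verified numerics on a compact subinterval), but carry none of them out. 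The heavy nesting $(\cosh(qx))^{1/q}$ inside $f_{M_2}$ and then inside $B_p$ that you flag is exactly why the authors left this as a conjecture; a proof would have to actually execute one of these programmes, not merely name them.
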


\begin{exa}
Let $M=M_3$. Then we have the similar situation as in the Example \ref{exa-Sec4-M1} with the same value of the coefficient $a^M_1$, and therefore asymptotic inequality \eqref{sub-super-stab-power-necessary} holds.
On the other side, as $(s,t)\to(0,1)$, values of the difference $M_3-\cR(B_p,M_3,B_q)$ may be positive or negative, depending on $p$ and $q$. Hence, $M_3$ could only be sub-stabilizable with power means.
\end{exa}

\begin{exa}
Let $M=M_4$. Then
\begin{align*}
	(M_4&-\cR(B_p,M_4,B_q))(x-t,x+t)
		\sim  \tfrac18 (3-p-2 q)t^2x^{-1} \\
		& + \tfrac{1}{384} \left(p (2+p) (-7+2 p)+12 p q-24 q^2+16 q^3-7 (3+4 q)\right)t^4x^{-3}+\dots
\end{align*}
For $p=3-2q$ and $q=\frac12(3\pm\sqrt{21})$ coefficients by $x^{-1}$ and $x^{-3}$ become equal to 0, the coefficient by $x^{-5}$ is equal to $\frac{13}{120} t^6$ and 
therefore the asymptotic inequality \eqref{sub-super-stab-power-necessary} holds.
\end{exa}

\begin{conj}
	Let $p=3-2q$ and $q=\frac12(3\pm\sqrt{21})$.
	Then $M_4-\cR(B_p,M_4,B_q)>0$.
\end{conj}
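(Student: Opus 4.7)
The plan is to lift the already-established asymptotic inequality $M_4 - \cR(B_p, M_4, B_q) \succ 0$ at $p=3-2q$, $q=\tfrac12(3\pm\sqrt{21})$ (shown in the preceding example) to the pointwise inequality on all of $(0,\infty)^2$. By symmetry and homogeneity the conjecture reduces, via the substitution $a=e^{-u}$, $b=e^u$ in \eqref{def-fM}, to
\begin{equation*}
\Phi(u) := f_{M_4}(u) - f_{\cR(B_p, M_4, B_q)}(u) > 0, \qquad u > 0.
\end{equation*}
Here $f_{M_4}$ is the explicit function from \eqref{M}, and with $N(u) := (\cosh(qu))^{1/q}$ the right-hand term may be written, by homogeneity of $M_4$, as an explicit $p$-power mean of $M_4(e^{-u}, N(u))$ and $M_4(N(u), e^u)$.

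Next I would extract boundary information at $u = 0$ and $u = \infty$. For $u \to 0^+$, the relation $f_M(u) = \cosh u \sum_n a_n^M \tanh^n u$ (with $a_n^M$ the coefficients of the $(x-t,x+t)$-expansion of $M$) converts the computation in the preceding example directly into a Taylor expansion: the vanishing of the $t^2 x^{-1}$ and $t^4 x^{-3}$ coefficients and the positivity of the $t^6 x^{-5}$ coefficient $+\tfrac{13}{120}$ yield $\Phi(u) = \tfrac{13}{120} u^6 + O(u^8)$, so $\Phi>0$ on some $(0,\delta)$. At the opposite end $u\to\infty$ (encoding the boundary point $(s,t)=(0,1)$ up to normalization), a separate comparison of the exponential growth rates of $f_{M_4}(u)\sim e^u/(\sqrt{2}\,\operatorname{arcsinh}(u\sqrt{2}))$ and of $f_{\cR}(u)$ is required; for $q=\tfrac12(3+\sqrt{21})>0$ one has $N(u)\sim 2^{-1/q}e^u$, while for $q=\tfrac12(3-\sqrt{21})<0$ one has $N(u)\to 2^{-1/q}$, and in each case a careful bookkeeping of the leading order of the outer $B_p$ confirms the inequality at infinity.

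The decisive step, and the main obstacle, is ruling out any zero of $\Phi$ on an intermediate compact subinterval of $(0,\infty)$. A natural attack is to study the quotient $\Psi(u) := f_{M_4}(u)/f_{\cR}(u)$, compute $(\log \Psi)'$, and try to reduce the positivity of $\log \Psi$ to an inequality between elementary functions of $u$ and $qu$. However the presence of $\operatorname{arcsinh}$, of the irrational exponent $q=(3\pm\sqrt{21})/2$, and of the outer $p$-power mean with $p=3-2q$ makes such a reduction resist any elementary closed-form treatment. A realistic finish would combine the two asymptotic analyses above with a rigorous interval-arithmetic verification on $[\delta, R]$, which is presumably why the statement is offered as a conjecture rather than a theorem.
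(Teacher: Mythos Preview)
The statement you are addressing is labeled a \emph{Conjecture} in the paper, and the paper offers no proof of it; the preceding Example only establishes the asymptotic inequality $M_4-\cR(B_p,M_4,B_q)\succ 0$ by computing the $t^6x^{-5}$ coefficient $\tfrac{13}{120}$, and then the pointwise inequality is simply proposed as a conjecture. So there is no ``paper's own proof'' to compare against, and you correctly anticipate this in your final paragraph.

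What you have written is therefore not a proof but a reasonable plan of attack: reduce to a one-variable function $\Phi(u)$ via homogeneity, control the $u\to0^+$ end from the asymptotic expansion already established, analyze the $u\to\infty$ end by explicit growth rates, and then try to bridge the middle. You are honest that this last step is the genuine obstacle and that no elementary argument is in hand; that assessment matches the paper's own stance, since the authors leave the statement open. As a minor remark, your identity $f_M(u)=\cosh u\sum_n a_n^M\tanh^n u$ is only a formal/asymptotic relation (valid as a Taylor expansion near $u=0$ for analytic $M$), not an equality for all $u$, so it should be invoked only for the local analysis at $u=0^+$, as you in fact do.
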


\begin{exa}
Mean $M=M_5$, when compared with the resultant mean--map behaves similarly as mean $M_1$. Namely, asymptotic inequality \eqref{sub-super-stab-power-necessary} holds,
and further,
	$\lim_{s\to0}M_5(s,1-s)=0$ and $\lim_{s\to0}\cR(B_p,M_5,B_q) (s,1-s) >0$, $\forall p,q \in \bR$.
	Analogously as in the Example \ref{exa-Sec4-M1}, we may conclude that mean $M_5$ cannot be either $(B_p,B_q)$-sub or super-stabilizable.
\end{exa}

\begin{exa}
Let $M=M_{\alpha,r}$, defined in \eqref{mean-def-Mi} with special and limit cases described in Proposition \ref{prop-Malphar-specialcases}. From \eqref{coeff-RBpNBq-all} we find the asymptotic expansion of the difference
	\bes
		(M_{\alpha,r}-\cR(B_p,M_{\alpha,r},B_q))(x-t,x+t)
			\sim \tfrac12 a_1^M t  
			+\dots,
	\ees
	where from Proposition \ref{prop-exp-Mi} we see that $a_1^M=\alpha \frac{\lvert t\rvert}t$. Therefore,
	asymptotic inequality \eqref{sub-super-stab-power-necessary} holds when $\alpha > 0$, and the opposite asymptotic inequality holds when $\alpha < 0$.

	Observing the limit of $M_{\alpha,r}(s,1-s)$ as $s\to0$, we see that it is equal to $0$ for $r+\alpha\ge0$ and $-(r+\alpha)$ for $r+\alpha<0$.
	
	If $\alpha>0$, then by the same argument as in the Example \ref{exa-Sec4-M1},
	mean $M_{\alpha,r}$ cannot be either $(B_p,B_q)$-sub- or super-stabilizable.
	If $\alpha<0$, then mean $M_{\alpha,r}$ can only be $(B_p,B_q)$-super-stabilizable.
	If $\alpha=0$, then $M_{\alpha,r}$ corresponds the logarithmic mean $L$, whose asymptotic expansion can be found in \cite{ElVu-2014-09}:
	\bes
		L(x-t,x+t)\sim x -\tfrac13 t^2x^{-1} -\tfrac4{45} t^4 x^{-3}
			- \tfrac{44}{945} t^6 x^{-5} +\O(x^{-7}).
	\ees
	Now we have
	\begin{align*}
		(L&-\cR(B_p,L,B_q))(x-t,x+t)
			\sim \tfrac18(1-p-2q) t^2x^{-1} \\
			&\qquad +\tfrac1{384}\big(2p^3 -3 p^2 -2p (5+2 q)+4q(4q(q-1)-5) +11 \big) t^4x^{-3}
			+\O(x^{-5}).
	\end{align*}
	If we set $p=1-2q$, then
		\bes
		(L-\cR(B_{1-2q},L,B_q))(x-t,x+t)
			\sim \tfrac1{96} q(q-1) t^4x^{-3}
			+\O(x^{-5}),
		\ees
	and if, additionally, $q=0$ or $q=1$, we obtain two pairs of power means for which the resultant mean $\cR(B_p,L,B_q)$ is equal to $L$:
		\bes
			\cR(A,L,G)=L,\quad \cR(H,L,A)=L,
		\ees
		which can be easily verified by direct computation.
\end{exa}





\end{document}